\begin{document}

\newtheorem{definition}{Definition}[section]

\newtheorem{proposition}[definition]{Proposition}

\newtheorem{claim}{Claim}[definition]

\newtheorem{prob}[definition]{Problem}

\newtheorem{lemma}[definition]{Lemma}

\newtheorem{corollary}[definition]{Corollary}
\newtheorem{theorem}[definition]{Theorem}

\newtheorem{notation}[definition]{Notation}
\newtheorem{exercise}[definition]{Exercise}

\newtheorem{example}[definition]{Example}

\newtheorem{thm}[definition]{Theorem}

\newtheorem{question}[definition]{Question}

\newtheorem{fact}[definition]{Fact}

\theoremstyle{definition}

\newtheorem{remark}[definition]{Remark}

\newcommand{\seq}[1]{{\left\langle #1 \right\rangle}}
\newcommand{\fc}{{\mathfrak{c}}}
\newcommand{\mR}{\mathcal{R}}
\newcommand{\RR}{\mathbb{R}}
\newcommand{\QQ}{\mathbb{Q}}
\newcommand{\NN}{\mathbb{N}}
\newcommand{\ZZ}{\mathbb{Z}}
\newcommand{\mA}{\mathcal{A}}
\newcommand{\mV}{\mathcal{V}}
\newcommand{\mH}{\mathcal{H}}
\newcommand{\mU}{\mathcal{V}}
\newcommand{\mP}{\mathcal{P}}
\newcommand{\mD}{\mathcal{D}}
\newcommand{\mB}{\mathcal{B}}
\newcommand{\C}{\mathrm{C}}
\newcommand{\mC}{\mathcal{C}}
\newcommand{\mK}{\mathcal{K}}
\newcommand{\mO}{\mathcal{O}}
\renewcommand{\O}{\mathcal{O}}
\newcommand{\mM}{\mathcal{M}}
\newcommand{\mF}{\mathcal{F}}
\newcommand{\D}{\mathrm{D}}
\newcommand{\0}{\mathrm{o}}
\newcommand{\OD}{\mathrm{OD}}
\newcommand{\Do}{\D_\mathrm{o}}
\newcommand{\sone}{\mathsf{S}_1}
\newcommand{\gone}{\mathsf{G}_1}
\newcommand{\gtwo}{\mathsf{G}_2}
\newcommand{\gn}[1]{\mathsf{G}_{#1}}
\newcommand{\sn}[1]{\mathsf{S}_{#1}}
\newcommand{\bmfin}{\mathsf{BM}_\mathrm{fin}}
\newcommand{\bmc}{\mathsf{BM}_\mathrm{\w}}
\newcommand{\bm}{\mathsf{BM}}
\newcommand{\sfin}{\mathsf{S}_\mathrm{fin}}
\newcommand{\gf}{\mathsf{G}_f}
\newcommand{\Em}{\longrightarrow}
\newcommand{\Def}[1]{{\bf #1}}
\newcommand{\menos}{{\setminus}}
\newcommand{\w}{{\omega}}
\newcommand{\1}{\textsc{Alice}}
\newcommand{\2}{\textsc{Bob}}
\newcommand{\lev}{\mathrm{Lev}}
\newcommand{\cov}{\mathrm{cov}}
\newcommand{\fb}{\mathrm{fb}}
\newcommand{\dom}{\mathrm{dom}}
\newcommand{\id}{\mathrm{id}}

\newcommand{\first}{\textit{first inning}}
\newcommand{\second}{\textit{second inning}}
\newcommand{\third}{\textit{third inning}}

\title{Some variations of the Banach-Mazur game}
\author{Leandro F. Aurichi$^1$}\thanks{$^1$Supported by FAPESP 2019/22344-0 and by National Group for Algebraic and Geometric Structures, and their Applications (GNSAGA-INdAM)}
\address{Instituto de Ci\^encias Matem\'aticas e de Computa\c c\~ao,
Universidade de S\~ao Paulo, Caixa Postal 668,
S\~ao Carlos, SP, 13560-970, Brazil}
\email{aurichi@icmc.usp.br}

\author{Maddalena Bonanzinga}
\address{Dipartimento di Scienze Matematiche e Informatiche, Scienze Fisiche e Scienze del la Terra, Universit\'a di Messina – Viale F. Stagno d'Alcontres 31, 98166 Messina, Italy}
\email{mbonanzinga@unime.it}

\author{Gabriel Andre Asmat Medina$^3$}\thanks{$^3$Supported by CAPES (88882.328757/2019-01)}
\address{Instituto de Ci\^encias Matem\'aticas e de Computa\c c\~ao,
Universidade de S\~ao Paulo, Caixa Postal 668,
S\~ao Carlos, SP, 13560-970, Brazil}
\email{andre\_asmat@usp.br}

\date{}

\begin{abstract}

The classical Banach-Mazur game is directly related to the Baire
property and the property of being a productively Baire space. In this
paper, we discuss two variations of this classic game that are even
more related to these properties.


\end{abstract}

\maketitle

\section{Introduction}

The famous Banach–Mazur game was proposed in 1935 by Stanislaw Mazur and recorded in the Scottish Book (\cite{Mauldin1981}, Problem 43). Let $(X, \tau)$ be a topological space. The Banach-Mazur game $\bm(X)$ played on $(X, \tau)$ is played between two players, \1 and \2, who, alternately, select non-empty open subsets of $X$. \1 goes first and chooses a non-empty open subset $A_{0}$ of $X$. \2 must respond by selecting a non-empty open subset $B_{0}\subseteq A_{0}$. Following this, \1 must select another non-empty open subset $A_{1}\subseteq B_{0} \subseteq A_{0}$ and in turn \2 must again respond by selecting a non-empty open subset $B_{1}\subseteq A_{1}\subseteq B_{0} \subseteq A_{0}$. In general, \1 selects any non-empty open subset $A_{n}$ of the last move $B_{n-1}$ of \2 and the latter player answers by choosing a non-empty open subset $B_{n}$ of the set $A_{n}$, just chosen by \1. Acting in this away, the players \1 and \2 produce a sequence of non-empty open sets 
$$  A_{0}\supseteq B_{0} \supseteq A_{1}\supseteq B_{1} \supseteq \cdots \supseteq A_{n} \supseteq B_{n} \supseteq \cdots $$
We shall declare that \2 wins a play of the Banach-Mazur game $\bm(X)$ if $ \bigcap_{n\in \omega} B_{n} = \bigcap_{n\in \omega} A_{n}\not=\emptyset$. Otherwise, \1 is said to be the winner of this play. 

Remember that a topological space is a Baire space provided countable intersections of dense open subsets are dense. Baire spaces can be characterized via the Banach–Mazur game. In fact, Oxtoby showed that a topological space $X$ is Baire if and only if \1 does not have a winning strategy in $\bm(X)$ (\cite{Kechris1995}, Theorem 8.11). Products of Baire spaces are not always Baire (\cite{Fleissner1978}, Example 1 and Example 4). A Baire space $X$ is productively Baire if $X\times Y$ is Baire, for each Baire space $Y$. Another application of the Banach-Mazur game to Baire spaces, is the following: if \2 has a winning strategy in $\bm(X)$ then $X$ is productively Baire. The reciprocal of this result does not hold. For example a Bernstein subset of the real line is productively Baire but the Banach-Mazur game is undertermined, that is, neither \1 nor \2 have a winning strategy in the Banach-Mazur game on such a space.

That is why the following question is natural: Is there a game-theoretical characterization for the property of being productively Baire?

In this paper we show some other game-theoretical conditions on the space to be productively Baire. For this we introduce two variations of the Banach-Mazur game.

This paper is organized as follows. In Section 2 we present the game $\bmfin$, our first variation of the Banach-Mazur game, where the second player may choose finitely many open sets each inning. With the game $\bmfin$ we present a game-theoretical condition on the space to be productively Baire, which is more general than the original one using the Banach-Mazur game.

In Section 3 we present another variation of the Banach-Mazur game, the game $\bmc$. This game is similar to the previous one, with the difference that the second player chooses a countable family of non-empty open sets. 
It is worth mentioning that both of these game, as the original Banach-Mazur game, provide a  characterization for the Baire property. 

Finally, Section 4 is dedicated to present an example, assuming the Continuum Hypothesis, that shows the games $\bmfin$ and $\bmc$ are not equivalent. We also present some open questions.

\section{The game $\bmfin(X)$}

In this section we introduce our first variation of the Banach-Mazur game. The difference with the classic Banach-Mazur game is that in this version $\2$ has the advantage of choosing a finite number of non-empty open sets instead of just one.

\begin{definition}
	Given a topological space $X$, we define the game $\bmfin(X)$ played as follows: \1 plays $A_0$ a non-empty open set. Then \2 plays $\mB_0$ a finite collection of non-empty open subsets of $A_0$. Then, for each $B \in \mB_0$, \1 plays $A_B \subset B$ a non-empty open set. Let $\mA_1 = \{A_B: B \in \mB_0\}$. Then \2 plays $\mB_1$ a finite collection of non-empty open subsets of $\bigcup \mA_1$ and so on. For each $n \in \w$, let $B_n = \bigcup \mB_n$. \2 is declared the winner if $\bigcap_{n \in \w} B_n \neq \emptyset$. \1 is declared the winner otherwise. 
\end{definition}

Sometimes it is easier to see this game with a minor change in the
rules for \2. In each inning, instead of picking finitely many
non-empty open subsets of  $\bigcup \mA_n$, Bob can pick finitely many
open subsets of each open set played by \1 - including picking none
for some of them. These two versions of the rules are easily seen to
be equivalent.

We present the following technical lemma which tells us that we can assume that each move of \2 can be formed by a finite family of non-empty open sets  pairwise disjoint.

\begin{lemma}\label{lema disjoint open}
	Let $X$ be a Hausdorff space without isolated points. We may suppose that each $\mB_n$ as in the definition of $\bmfin(X)$ is made of pairwise disjoint sets.
\end{lemma}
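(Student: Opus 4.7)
The plan is to replace any play of $\bmfin(X)$ by one in which Bob's moves are pairwise disjoint while preserving the winner. The tool is a \emph{disjoint refinement}: given a finite family $\{B_1,\ldots,B_k\}$ of non-empty open sets, I want to produce a family $\{B_1',\ldots,B_k'\}$ of pairwise disjoint non-empty open sets with $B_i'\subseteq B_i$ for each $i$.

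First, I would show that disjoint refinements always exist in our setting. Since $X$ is Hausdorff without isolated points, every non-empty open set is infinite (a finite non-empty open subset of a Hausdorff space would isolate each of its points using finitely many separations). Hence I can pick pairwise distinct points $x_i \in B_i$ inductively. Applying Hausdorffness to the finite set $\{x_1,\ldots,x_k\}$, I obtain pairwise disjoint open neighborhoods $U_i \ni x_i$, and set $B_i' := B_i \cap U_i$.

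Next, given any Bob strategy $\sigma$ in $\bmfin(X)$, I would build a modified strategy $\sigma'$ which at each inning computes the move $\mB_n$ prescribed by $\sigma$ and replaces it by a disjoint refinement $\mB_n'$. The two plays can be run in parallel: Alice's response $A_{B'}\subseteq B'\in\mB_n'$ in the refined play is translated to $A_B:=A_{B'}$ in the original play, which is a legal response since $A_{B'}\subseteq B'\subseteq B$. Then the key identity to establish is
\[
\bigcap_{n\in\w}\bigcup \mB_n' \;=\; \bigcap_{n\in\w}\bigcup \mB_n,
\]
which says $\sigma$ and $\sigma'$ have the same winner in the corresponding plays. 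This will follow from two interleaving inclusions: $\bigcup \mB_n'\subseteq\bigcup \mB_n$ (by refinement), and $\bigcup \mB_{n+1}\subseteq\bigcup \mB_n'$ (because $\mB_{n+1}$ consists of open subsets of $\bigcup\mA_{n+1}$, and every $A_{B'}\in\mA_{n+1}$ is contained in the corresponding $B'\in\mB_n'$). A direct set-theoretic computation from these two containments yields the equality of tail intersections.

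The main obstacle is keeping the parallel simulation of the two plays coherent — identifying Alice's moves bijectively via the refinement and confirming that each translated move remains legal in the other game — since the verification that the winning condition is preserved rests entirely on the two containments above. Once these are in place, the same argument works symmetrically for Alice's strategies, yielding the full WLOG statement of the lemma.
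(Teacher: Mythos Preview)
Your proposal is correct and follows essentially the same route as the paper: build a disjoint refinement of each of Bob's moves using Hausdorffness plus the absence of isolated points, run the original and refined strategies in parallel by feeding Alice's moves (which lie inside the refined sets, hence also inside the original ones) to both, and conclude via the interleaving containments $\bigcup\mB_n'\subseteq\bigcup\mB_n$ and $\bigcup\mB_{n+1}\subseteq\bigcup\mA_{n+1}\subseteq\bigcup\mB_n'$ that the tail intersections coincide. The paper asserts this last equality in a single line, whereas you spell out the two inclusions that drive it; otherwise the arguments are the same.
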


\begin{proof}	
	We will show that if $\sigma$ is a winning strategy for \2 then there exists another winning strategy $\hat{\sigma}$ for \2 such that every move from \2 is 	formed by non-empty pairwise disjoint open sets.
	
	Indeed, in the first inning, \1 plays $A_{0}$. Next \2 responds $\sigma(\langle A_{0} \rangle)=\mathcal{B}_{0}=\{B^{0}_{0}, \cdots,B^{0}_{n_{0}}\}$ for some $n_{0}\in\omega$. So let $x^{0}_{i}\in B^{0}_{i}$ for each $i\in \{0,\cdots, n_{0}\}$ such that $x^{0}_{i}\not=x^{0}_{j}$ if $i\not=j$. Note that this is possible because $X$ has no isolated points. Now, since $X$ is Hausdorff, there is a pairwise disjoint family of non-empty open sets $\hat{\mathcal{B}_{0}}=\{\hat{B^{0}_{0}}, \cdots, \hat{B^{0}_{n_{0}}} \}$ such that $\hat{B^{0}_{i}} \subseteq B^{0}_{i} $ for each $i\in \{0,\cdots, n_{0}\}$. So define $\hat{\sigma}(\langle A_{0} \rangle)=\hat{\mathcal{B}_{0}}$. 
	
	Next, in the second inning, \1 plays $\mathcal{A}_{1}=\{A_{B}:B\in \hat{\mathcal{B}_{0}}\}$, then \2 responds $\sigma(\langle A_{0}, \mathcal{A}_{1}\rangle)=\mathcal{B}_{1}=\bigcup_{A\in \mathcal{A}_{1}} \mathcal{F}^{1}_{A}$, where each $\mathcal{F}^{1}_{A}$ is a finite family of non-empty open subsets of $A\in\mathcal{A}_{1}$. Now for each $\mathcal{F}^{1}_{A}=\{B^{1}_{0}, \cdots, B^{1}_{n_{1}}\}$, we apply the above argument, that is, there is a family $\hat{\mathcal{F}^{1}_{A}} = \{ \hat{B^{1}_{0}}, \cdots, \hat{B^{1}_{n_{1}}}\}$ of pairwise disjoint non-empty open sets such that $\hat{B^{1}_{i}}\subseteq B^{1}_{i}$ for each $i\in\{0, \cdots, n_{1}\}$, so define $\hat{\sigma}(\langle A_{0}, \mathcal{A}_{1} \rangle)= \bigcup_{A\in\mathcal{A}_{1}} \hat{\mathcal{F}^{1}_{A}}$ and so on.
	
	As $\sigma$ is a winning strategy then there exists $x\in \bigcap_{n \in \w} \bigcup\sigma(\langle A_{0}, \mathcal{A}_{1}, \cdots, \mathcal{A}_{n} \rangle)=\bigcap_{n \in \w} \bigcup\mathcal{A}_{n}=\bigcap_{n \in \w} \bigcup\hat{\sigma}(\langle A_{0}, \mathcal{A}_{1}, \cdots, \mathcal{A}_{n} \rangle)$. Therefore $\hat{\sigma}$ is a winning strategy for \2 in $\bmfin(X)$.
\end{proof}






Now we can generalize Oxtoby's theorem.

\begin{proposition}
  Let $X$ be a Hausdorff space without isolated points. If \2 has a winning strategy on $\bmfin(X)$, then $X$ is productively Baire. 
\end{proposition}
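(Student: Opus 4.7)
The plan is to prove directly that $X\times Y$ is Baire whenever $Y$ is Baire. Fix a winning strategy $\sigma$ for \2 in $\bmfin(X)$. By Lemma~\ref{lema disjoint open} we may assume that every $\sigma$-response is made of pairwise disjoint non-empty open sets, and moreover we adopt the equivalent formulation recorded after the definition in which each member of \2's next move is explicitly a subset of one of \1's last open sets, so that the $\sigma$-tree is finitely branching and nested. Given a Baire space $Y$, dense open sets $(D_n)_{n\in\w}$ in $X\times Y$, and a non-empty basic open box $U\times V$, the task is to find $(x,y)\in(U\times V)\cap\bigcap_n D_n$.

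In parallel, I will run a $\sigma$-play of $\bmfin(X)$ starting from $A_0=U$ and define a strategy $\tau$ for \1 in $\bm(Y)$ starting from $V_0=V$. Suppose inductively that $\mB_n=\{B^n_1,\dots,B^n_{k_n}\}$ has been produced on the $X$-side and \1's move $V_n$ on the $Y$-side, and that \2 responds in $\bm(Y)$ with $V'_n\subseteq V_n$. Setting $W^n_0=V'_n$, for each $i=1,\dots,k_n$ use density of $D_n$ to choose non-empty open $A^{n+1}_i\subseteq B^n_i$ and $W^n_i\subseteq W^n_{i-1}$ with $A^{n+1}_i\times W^n_i\subseteq D_n$. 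Set $V_{n+1}:=W^n_{k_n}$; then $A^{n+1}_i\times V_{n+1}\subseteq D_n$ for every $i$. Feeding $\mA_{n+1}=\{A^{n+1}_i:i\le k_n\}$ into $\sigma$ yields $\mB_{n+1}$, and the construction iterates.

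Since $Y$ is Baire, Oxtoby's theorem says $\tau$ is not a winning strategy for \1 in $\bm(Y)$, so some run against $\tau$ produces $y\in\bigcap_n V_n$. The parallel $X$-play is a $\sigma$-play, hence won by \2, and some $x\in\bigcap_n\bigcup\mB_n$ exists. Pairwise disjointness singles out, for each $n$, a unique $i_n$ with $x\in B^n_{i_n}$; the nesting rule forces the parent of $B^{n+1}_{i_{n+1}}$ in the $\sigma$-tree to be precisely $A^{n+1}_{i_n}$, so $x\in A^{n+1}_{i_n}$. Combined with $y\in V_{n+1}$ this gives $(x,y)\in A^{n+1}_{i_n}\times V_{n+1}\subseteq D_n$ for every $n$, while $(x,y)\in U\times V$ by construction, completing the argument.

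The main obstacle built into this plan is the cross-game synchronization: \1's single $Y$-move $V_{n+1}$ must be compatible simultaneously with every branch $A^{n+1}_i$ of the next round of the $X$-game, even though $\sigma$ has no access to $Y$. The finiteness of each $\mB_n$ — the very feature by which $\bmfin$ differs from the classical Banach-Mazur game — is exactly what lets the iterative refinement of $W^n_i$ terminate in a single $V_{n+1}$, and pairwise disjointness is what lets a single winning point $x\in X$ designate the branch along which the compatibility with $D_n$ is actually cashed in.
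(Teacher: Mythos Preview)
Your argument is correct and shares the paper's core mechanism: synchronizing the finitely-branching $\sigma$-tree in $\bmfin(X)$ with a single run of $\bm(Y)$ by iteratively shrinking the $Y$-side open set across the finitely many current branches, then using pairwise disjointness to select the branch that the winning $x$ actually sits in. The paper's version is the contrapositive of yours: it assumes $X\times Y$ is not Baire, invokes Oxtoby to obtain a winning strategy $\rho$ for \1 in $\bm(X\times Y)$, and then uses $\rho$ (in place of your direct appeal to density of the $D_n$) to perform the same iterative shrinking and produce a winning strategy for \1 in $\bm(Y)$, contradicting that $Y$ is Baire. So where you fix dense open sets $(D_n)$ and extract a point of $\bigcap_n D_n$ directly, the paper hides those sets inside $\rho$ and argues by contradiction. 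Your route is marginally more elementary in that it uses Oxtoby's theorem only once (for $Y$) rather than twice, and makes the role of the $D_n$ explicit; the paper's route has the advantage of keeping everything phrased purely in terms of strategies. Substantively, the two proofs are the same.
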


\begin{proof}
	Let $\sigma$ be a winning strategy for \2 in the $\bmfin(X)$ game. Suppose that $X$ is not productively Baire, that is, there exists a Baire space, call it $Y$, such that $X\times Y$ is not a Baire space.

	Suppose that $\rho$ is a winning strategy for \1 in the $\bm(X \times Y)$ game. We will define a winning strategy $f$ for \1 in the $\bm(Y)$ game. Note that this is enough. Also, notice that we can suppose that $\rho$ always answer with basic open sets in the product. 
	
	In the first inning, in $\bm(X\times Y)$, \1 plays $A_0 \times B_0 = \rho(\seq{})$. Then, in $\bm(Y)$, \1 plays $f(\seq{}) = B_0$. Next \2 responds $W_0 \subset B_0$ a non-empty open set. We now will proceed to define $f(\seq{W_0})$. In $\bmfin(X)$, \1 plays $A_{0}$ and \2 responds $\{V_0, ..., V_n\} = \sigma(\seq{A_0})$. By Lemma \ref{lema disjoint open}, we may suppose that $V_i \cap V_j = \emptyset$ if $i \neq j$.

	In the second inning, in $\bm(X\times Y)$, \1 could play in the following $n + 1$ ways:    
	
	\[A_1^0 \times C_0 = \rho(\seq{V_0 \times W_0})\]
	\[A_1^1 \times C_1 = \rho(\seq{V_1 \times C_0})\]
	\[\vdots\]
	\[A_1^n \times C_n = \rho(\seq{V_n \times C_{n - 1}})\]
	
	Note this is valid since $C_0 \subset W_0 \subset B_0$ and each $C_k \subset C_{k - 1} \subset B_0$ for $0<k\leq n$, also $A^{k}_{1}\subset V_{k}$ for each $0\leq k \leq n$. Let $B_1 = C_{n}$. Finally, in $\bm(Y)$, \1 plays $f(\seq{W_0}) = B_1$. Next \2 responds $W_1 \subset B_1$ a non-empty open set and let us define $f(\seq{W_0, W_1})$. Now, in $\bmfin(X)$, \1 plays $\{A^{0}_{1}, \cdots, A^{n}_{1}\}$ next \2 responds 
	\[\{U_0, ..., U_k\} = \sigma(\seq{A_0, \{A_1^0, ..., A_1^n\}}).\]

	By Lemma \ref{lema disjoint open} we may suppose that $U_0, ..., U_k$ and $A_1^0, ..., A_1^n$ are pairwise disjoint sets. Then for each $i = 0, ..., k$, there is only one $g(i)\in \{0, \cdots, n  \}$ such that \[U_i \subset A_1^{g(i)}.\]

	In the third inning, in $\bm(X\times Y)$, \1 could play in the following $k + 1$ ways:    
	
	\[A_2^0 \times D_0 = \rho(\seq{V_{g(0)} \times C_{g(0) - 1}, U_0 \times W_1})\]
	\[A_2^1 \times D_1 = \rho(\seq{V_{g(1)} \times C_{g(1) - 1}, U_1 \times D_0})\]
	\[\vdots\]
	\[A_2^k \times D_k = \rho(\seq{V_{g(k)} \times C_{g(k) - 1}, U_k \times D_{k - 1}}).\]  
	
	On the previous equations, use $C_{-1} = W_0$ if neccessary. As before, $D_i \subset D_{i -1} \subset W_1$. Let $B_{2}=D_{k}$ and, finally, in $\bm(Y)$, \1 plays $f(\seq{W_0, W_1})=D_k$. Next in $\bm (Y)$, \2 responds $W_2 \subset B_2$ a non-empty open set. Now, in $\bmfin(X)$, \1 can play $\{A^{0}_{2}, \cdots, A^{k}_{2}\}$ next \2 responds $\sigma(\langle A_{0},\{A_1^0, ..., A_1^n\} , \{A^{0}_{2}, \cdots, A^{k}_{2}\}\rangle)$. The remaining construction of $f$ is in similar fashion.
	
	Let us prove now that $f$ is winning for \1 in $\bm(Y)$. So fix a play $$\seq{B_0, W_0, B_1, W_1, ..., B_n, W_n, ...}$$ following $f$ (with notation similar to what we presented in the construction of $f$). We need to show that $\bigcap_{n \in \w} B_n = \emptyset$.  Suppose not and let $y \in \bigcap_{n \in \w} B_n$. 
	
	Let $\mA_n = \{A_n^0, ..., A_n^{k_n}\}$. Since $\sigma$ is winning for \2 in $\bmfin(X)$, there is an
	\[x \in \bigcap_{n \in \w} \bigcup \mA_n.\]
	Since we are supposing that each $\mA_n$ is made by mutually disjoint open sets, there is only one $A_n$ in each $\mA_n$ such that $x \in A_n$.

	In particular, in $\bm(X\times Y)$, in the first inning, $\langle x, y\rangle\in A_{0}\times B_{0}=\rho(\seq{})$. In the second inning, there is only one $A^{n_{0}}_{1}\in \mathcal{A}_{1}$ such that $x\in A^{n_{0}}_{1}$. Then $\langle x, y\rangle\in A^{n_{0}}_{1}\times C_{n_{0}}=\rho(\langle V_{n_{0}}\times C_{n_{0}-1}  \rangle)$. In the third inning, as $x\in \bigcup\mathcal{A}_{2}$, there is only one $A^{n_{1}}_{2}\in\mathcal{A}_{2}$ such that $x\in A^{n_{1}}_{2}$. Then $\langle x, y\rangle\in A^{n_{1}}_{2}\times D_{n_{1}}=\rho(\langle
	V_{n_{0}}\times C_{n_{0}-1}, U_{n_{1}}\times D_{n_{1}-1}  \rangle)$. Procceding like this, we can find a play of $\bm(X \times Y)$ that has $\seq{x, y}$ in all its movements but it is compatible with $\rho$, which is a contradiction.

\end{proof}


We will show in Section 3 that the game $\bmfin$ is equivalent to the Banach-Mazur game in the point of view of \1 (i.e she has a winning strategy in one of the games if, and only if, she has one in the other). But this is not the case for \2. We will end this section showing that a Bernstein set is a witness for this.

Remember that a subset $X\subseteq \RR$ is a Bernstein set if both $X$ and $\mathbb{R}\setminus X$ meets every uncountable closed subset of the real line. A Bernstein set has no isolated points and is dense in $\RR$. To see that \2 has no winning strategy for the $\bm$ game in a Bernstein set just recall the following:

\begin{proposition}
	If $X \subset \RR$ has no isolated points and \2 has a winning strategy on $\bm(X)$, then $X$ contains a Cantor set.
\end{proposition}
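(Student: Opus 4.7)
The plan is to use Bob's winning strategy $\sigma$ to construct a Cantor scheme of non-empty open subsets of $X$ indexed by the binary tree $2^{<\omega}$, where each branch corresponds to a legal play of $\bm(X)$ compatible with $\sigma$, and then read off a copy of $2^\omega$ inside $X$.

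Recursively, for each $s \in 2^{<\omega}$ I would produce a finite play
\[ p_s = \seq{A_0, B_0, A_1, B_1, \dots, A_{|s|}, B_{|s|}} \]
of $\bm(X)$ consistent with $\sigma$ (so $B_i = \sigma(\seq{A_0,\dots,A_i})$), together with non-empty open sets $U_s \subseteq X$ of diameter at most $2^{-|s|}$ satisfying $U_s = A_{|s|}$ and $\overline{U_{s\frown 0}} \cap \overline{U_{s\frown 1}} = \emptyset$, both contained in $B_{|s|}$. The construction at stage $s$ goes as follows: Bob's last move $B_{|s|}$ is a non-empty open subset of $X$; since $X$ is $T_1$ with no isolated points, $B_{|s|}$ must contain at least two distinct points, and since $X \subseteq \RR$ is metrizable, I can pick two open subsets $U_{s\frown 0}, U_{s\frown 1}$ of $B_{|s|}$ with disjoint closures and diameter at most $2^{-|s|-1}$. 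These become Alice's next moves along the two branches, and applying $\sigma$ gives Bob's response in each. The two extended plays $p_{s\frown 0}$ and $p_{s\frown 1}$ are still consistent with $\sigma$.

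For each $x \in 2^\w$, the infinite play along the branch $x$ is consistent with $\sigma$, and since $\sigma$ is winning, the intersection $\bigcap_{n \in \w} B_{x|n}$ is non-empty. The diameter condition forces this intersection to consist of a single point $f(x)$, and the disjoint-closure condition ensures that $f(x) \neq f(y)$ whenever $x \neq y$. The same diameter condition shows $f \colon 2^\w \to X$ is continuous. Being a continuous injection from the compact space $2^\w$ into the Hausdorff space $X$, $f$ is a homeomorphism onto its image, which is a Cantor set contained in $X$.

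The only step requiring real care is the splitting at each node, and the assumptions handle it cleanly: no isolated points guarantees that Bob's move always has at least two points, and the metrizability inherited from $\RR$ allows us to select two small open sets with disjoint closures. Everything else is a routine Cantor-scheme argument.
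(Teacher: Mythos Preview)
Your argument is correct and is the standard Cantor-scheme construction for this folklore result. Note, however, that the paper does not actually supply a proof of this proposition: it is stated with the phrase ``just recall the following'' and used only to conclude that \textsc{Bob} has no winning strategy in $\bm(X)$ when $X$ is a Bernstein set. So there is no paper proof to compare against; your write-up fills in exactly the details one would expect, and the two hypotheses (no isolated points for the splitting, $X\subset\RR$ for the metric control of diameters and closures) are used precisely where they are needed.
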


The previous result shows that \2 does not have a winning strategy for the $\bm(X)$ game played on a Bernstein set.


On the other hand, \2 has a winning strategy for $\bmfin$ in a Bernstein set

\begin{proposition}\label{2bmfinberstein}
  Suppose that $X \subset \RR$ is a Bernstein set. Then \2 has a winning strategy on $\bmfin(X)$. 
\end{proposition}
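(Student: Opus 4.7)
The plan is to use Player 2's ability to pick several open sets per inning to build a Cantor scheme of open subsets of $\RR$ whose traces on $X$ are her successive moves; the intersection of the scheme is then a Cantor set in $\RR$, which the Bernstein property forces to meet $X$.

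More concretely, I would define Player 2's strategy $\sigma$ by induction so that after the $n$-th inning the family $\mathcal{B}_n$ has the form $\{U^s_n \cap X : s \in 2^{n+1}\}$, where the $U^s_n$ are pairwise disjoint nonempty open subsets of $\RR$ of diameter at most $1/(n+1)$ satisfying $\overline{U^{s\frown i}_n} \subseteq U^s_{n-1}$ for each $s \in 2^n$ and $i \in \{0,1\}$ (with the convention that $U^{\emptyset}_{-1}$ is a fixed open subset of $\RR$ whose trace on $X$ is $A_0$). Whenever Player 1 answers $\mathcal{B}_{n-1}$ with open sets $A_B \subseteq B = U^s_{n-1} \cap X$, I would write $A_B = W_s \cap X$ for some open $W_s \subseteq U^s_{n-1}$ and pick, inside each $W_s$, two disjoint open subsets of diameter less than $1/(n+1)$ with closures contained in $W_s$; this is possible because $W_s$ is a nonempty open subset of $\RR$ and $\RR$ is Hausdorff without isolated points. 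Density of $X$ in $\RR$, which follows from the Bernstein property (any nonempty open interval contains a Cantor set, which $X$ must meet), guarantees that each $U^s_n \cap X$ is nonempty, so the resulting $\mathcal{B}_n$ is a legitimate move.

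To verify that $\sigma$ is winning, a standard Cantor-scheme argument shows that $C := \bigcap_{n \in \omega} \bigcup_{s \in 2^{n+1}} U^s_n = \bigcap_{n \in \omega} \bigcup_{s \in 2^{n+1}} \overline{U^s_n}$ is homeomorphic to $2^\omega$ and is, in particular, an uncountable closed subset of $\RR$. By the Bernstein property, $C \cap X \neq \emptyset$; since $\bigcap_{n \in \omega} \bigcup \mathcal{B}_n = C \cap X$, Player 2 wins every play compatible with $\sigma$. The only technical point is the inductive construction of the scheme, but this is routine: any nonempty open subset of $\RR$ contains two disjoint open subsets of arbitrarily small diameter whose closures lie inside it, and every such subset meets the dense set $X$.
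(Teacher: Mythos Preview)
Your proposal is correct and follows essentially the same approach as the paper: Player~2 splits each of Player~1's open sets into two pieces with small diameter and with (real) closures nested inside the previous stage, producing a Cantor scheme whose intersection is an uncountable closed subset of $\RR$ that must meet $X$ by the Bernstein property. The only cosmetic difference is that the paper explicitly demands disjoint \emph{closures} at each splitting, whereas you only require the open pieces $U^{s\frown 0}_n$, $U^{s\frown 1}_n$ to be disjoint; your nesting condition $\overline{U^{s\frown i}_n}\subseteq U^s_{n-1}$ already forces distinct branches to land in disjoint open sets one level up, so injectivity of the branch map (and hence uncountability of $C$) still goes through.
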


\begin{proof}
  For every open set $V \subset X$, let $V^*$ be an open set in $\RR$ such that $V = V^* \cap X$. On each inning, \2 will split every open set played by \1 in two pieces, therefore, we will enumerate all the open sets played with sequences in $2^{<\w}$. On the inning of number $n$, let
  \[\{A_s: s \in 2^n\}\]
  be the open sets played by \1. For each $s \in 2^n$, let $B_{s \smallfrown 0}$ and $B_{s \smallfrown 1}$ be two open sets such that:
  \begin{itemize}
  \item $\emptyset \neq B_{s \smallfrown i} \subset \overline{B_{s \smallfrown i}^*}^\RR \subset A_s^*$;
  \item $\overline{B_{s \smallfrown 0}^*}^\RR \cap \overline{B_{s \smallfrown 1}^*}^\RR = \emptyset$;
  \item the diameter of $B_{s \smallfrown i}$ is less than $\frac{1}{n + 1}$.
  \end{itemize}

  Then, in the next inning, we define $A_s$ as the open set given by \1 such that $A_s \subset B_s$.

  For each $n > 0$, let $B_n = \bigcup\{B_s: l(s) = n\}$. Note that, by the way we defined it, $C = \bigcap_{n \in \w} \overline{B_n^*}$ is a Cantor set. Therefore, $C \cap X \neq \emptyset$ and thus \2 wins the game. 
\end{proof}

The two previous results show that $\bm$ and $\bmfin$ are not equivalent for \2. 


\section{The game $\bmc(X)$}

In this part we present the game $\bmc$. The difference from the game $\bmfin$ is basically that in each inning \2 plays a countable number of non-empty open subsets contained in the union of the last move of \1. Instead of just finitely many as the previous game. 

\begin{definition}
  Given a topological space $X$, we define the game $\bmc(X)$ played as follows: \1 plays $A_0$ a non-empty open set. Then \2 plays $\mB_0$ a countable collection of non-empty open subsets of $A_0$. Then, for each $B \in \mB_0$, \1 plays $A_B \subset B$ a non-empty open set. Let $\mA_1 = \{A_B: B \in \mB_0\}$. Then \2 plays $\mB_1$ a countable collection of non-empty open subsets of $\bigcup \mA_1$ and so on. For each $n \in \w$, let $B_n = \bigcup \mB_n$. \2 is declared the winner if $\bigcap_{n \in \w} B_n \neq \emptyset$. \1 is declared the winner otherwise. 
\end{definition}

The following results show that the games $\bm, \bmfin$  and $\bmc$ are all equivalent for \1.

\begin{proposition}\label{n1B}
If \1 does not have a winning strategy on $\bmc(X)$, then $X$ is a Baire space.  
\end{proposition}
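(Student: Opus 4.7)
The plan is to prove the contrapositive: if $X$ is not Baire, then \1 has a winning strategy in $\bmc(X)$. This is a direct adaptation of Oxtoby's classical argument for $\bm(X)$, and works because intersecting each of \2's countably many open moves with a dense open set still yields non-empty open sets, so the countable aspect costs \1 nothing.

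First I would unpack the failure of the Baire property: pick a sequence $(D_n)_{n \in \w}$ of dense open subsets of $X$ such that $\bigcap_{n\in\w} D_n$ is not dense, and fix a non-empty open $U \subseteq X$ with $U \cap \bigcap_{n\in\w} D_n = \emptyset$.

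Next I would write down \1's strategy explicitly. Her opening move is $A_0 = U$. At inning $n$, after \2 plays $\mB_n$, for each $B \in \mB_n$ she responds with $A_B := B \cap D_n$. Since $D_n$ is dense open and $B$ is non-empty open, $A_B$ is a non-empty open subset of $B$, so the rules are respected.

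Finally I would verify this strategy is winning. For any play $\seq{A_0, \mB_0, \mA_1, \mB_1, \ldots}$ compatible with the above strategy, every element of $\mA_{n+1}$ is contained in $D_n$, so $\bigcup \mA_{n+1} \subseteq D_n$; since $\mB_{n+1}$ consists of open subsets of $\bigcup \mA_{n+1}$, we get $B_{n+1} = \bigcup \mB_{n+1} \subseteq D_n$. Combined with $B_0 \subseteq A_0 = U$, this gives
\[
\bigcap_{n\in\w} B_n \subseteq U \cap \bigcap_{n\in\w} D_n = \emptyset,
\]
so \1 wins. There is no real obstacle here; the only thing worth flagging is that the argument does not go through the comparison $\bmc \Rightarrow \bm$ (which would require converting countable responses to single ones), but instead mimics Oxtoby directly inside $\bmc$, exploiting that the dense-open trick parallelizes trivially over the countable families $\mB_n$.
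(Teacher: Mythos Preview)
Your proof is correct and essentially identical to the paper's own argument: both prove the contrapositive by having \1 open with a non-empty open set $U$ missing $\bigcap_n D_n$, and at each subsequent inning intersect every member of \2's countable family with the next dense open $D_n$, yielding $\bigcap_n B_n \subseteq U \cap \bigcap_n D_n = \emptyset$. The only differences are notational.
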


\begin{proof}
We will show that if $X$ is not a Baire space then \1 has a winning strategy $\delta$ on $\bmc(X)$. 

Since, $X$ is not Baire, there are a sequence $\langle D_{n} : n\in\omega\rangle$ of open dense subsets of $X$ and a non-empty open subset $A$ of $X$ such that $\bigcap_{n \in \w}D_{n}\cap A=\emptyset$. Then, in the first inning, \textcolor{black}{\1 plays $\delta(\seq{})=A_{0}=A$} and \2 responds $\mB_{0}=\{B^{0}_{n} : n\in\omega  \}$, a countable collection of non-empty open subsets of $A_{0}$. In the second inning, \textcolor{black}{\1 plays $\delta(\langle \mB_{0} \rangle)= \mA_{1}=\{D_{0}\cap B^{0}_{n} :n\in\omega\}$}. Note that this move is valid because $D_{0}$ is open dense. Next \2 responds $\mB_{1}=\{B^{1}_{n}:n\in\omega\}$ with $B^{1}_{n}\subseteq \bigcup\mA_{1}$ for each $n\in\omega$. In the third inning, as $D_{1}$ is dense, \textcolor{black}{\1 can play $\delta(\langle \mB_{0}, \mB_{1} \rangle)= \mA_{2}=\{D_{1}\cap B^{1}_{n} :n\in\omega\}$}. Next \2 responds $\mB_{2}=\{B^{2}_{n}:n\in\omega\}$ with $B^{2}_{n}\subseteq \bigcup\mA_{2}$ for each $n\in\omega$, and so on. The remaining construction of $\delta$ is in similar fashion.

Let us prove now that $\delta$ is winning for \1 in $\bmc(X)$. So fix a play $$\seq{A_0, \mB_0, \mA_1, \mB_1, ..., \mA_n, \mB_n, ...}$$ following $\delta$ (with notation similar to what we presented in the construction of $\delta$). We need to show that $\bigcap_{n \in \w} B_n = \emptyset$, where $B_n=\bigcup{\mB}_n$ for every $n\in\omega$. In fact, note that $B_{0}\subseteq A$ and for each $n>0$ we have $B_{n}\subseteq D_{n-1}$. Then $\bigcap_{n \in \w}B_{n}\subseteq \bigcap_{n \in \w}D_{n}\cap A=\emptyset$. Therefore $\delta$ is winning for \1.

\end{proof}

\begin{corollary}
	Let $X$ be a non-empty topological space. Then the following are equivalent:
	\begin{enumerate}
		\item $X$ is Baire
		
		\item \1 does not have a winning strategy on $\bm(X)$

		\item \1 does not have a winning strategy on $\bmfin(X)$
		
		\item \1 does not have a winning strategy on $\bmc(X)$
		
    \end{enumerate}
\end{corollary}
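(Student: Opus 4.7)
The plan is to close the cycle $(1) \Rightarrow (2) \Rightarrow (3) \Rightarrow (4) \Rightarrow (1)$. Two of the links are already at hand: the equivalence $(1) \Leftrightarrow (2)$ is Oxtoby's theorem, recalled in the introduction, and $(4) \Rightarrow (1)$ is precisely Proposition~\ref{n1B}. So the missing content lies entirely in the two implications $(2) \Rightarrow (3)$ and $(3) \Rightarrow (4)$.

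Both of these follow from a single observation: from \2's point of view, the three games are nested. Every legal move of \2 in $\bm(X)$ (a single open set) is also a legal move in $\bmfin(X)$ (when read as a singleton collection), and every legal move in $\bmfin(X)$ is a legal move in $\bmc(X)$. Since \1 responds with a family indexed by \2's previous move, her responses in the larger games are automatically responses of the correct cardinality in the smaller ones. Hence any winning strategy for \1 in a larger game restricts to a winning strategy in the smaller game, and the two missing implications follow by contraposition.

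Concretely, for $(2) \Rightarrow (3)$ I would argue as follows: given a winning strategy $\sigma$ for \1 in $\bmfin(X)$, define a strategy $\sigma'$ for \1 in $\bm(X)$ by feeding, after each move $B_n$ of \2, the singleton $\{B_n\}$ into $\sigma$ and letting \1 play the unique element of the response. A play of $\bm(X)$ consistent with $\sigma'$ is in bijection with a play of $\bmfin(X)$ consistent with $\sigma$ in which every $\mB_n$ is a singleton, and the two plays share the same intersection $\bigcap_{n \in \w} \bigcup \mB_n$; this is empty because $\sigma$ wins. For $(3) \Rightarrow (4)$ the conversion is even more direct: since every finite collection is a countable collection, any play of $\bmfin(X)$ is literally a play of $\bmc(X)$, and therefore any winning strategy for \1 in $\bmc(X)$, when restricted to the finite-collection subtree, is already a winning strategy in $\bmfin(X)$.

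There is no genuine obstacle; the real content of the corollary is that Oxtoby's theorem at one end and Proposition~\ref{n1B} at the other bracket the chain, while the intermediate links are furnished by the trivial nesting of the three games from \2's perspective.
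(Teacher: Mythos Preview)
Your proposal is correct and follows essentially the same approach as the paper: close the cycle using Oxtoby for $(1)\Rightarrow(2)$ and Proposition~\ref{n1B} for $(4)\Rightarrow(1)$, and obtain $(2)\Rightarrow(3)\Rightarrow(4)$ by contraposition from the observation that a winning strategy for \1 in the game where \2 has more freedom restricts to one in the game where \2 has less. The paper states these two links in a single sentence each (``because the game $\bmfin$ is more difficult for \1'', etc.), whereas you spell out the nesting explicitly, but the content is the same.
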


\begin{proof}
The implication $(1)\Rightarrow (2)$ is the classic Oxtoby's result (\cite{Kechris1995}, Theorem 8.11). Note that if $\1$ has a winning strategy on $\bmfin(X)$	then $\1$ has a winning strategy on $\bm(X)$, because the game $\bmfin$ is more difficult for \1. This proves $(2)\Rightarrow (3)$. 
Also, if $\1$ has a winning strategy on $\bm_\omega(X)$, then $\1$ has a winning strategy on $\bmfin(X)$, because the game $\bm_\omega$ is more difficult for \1. This proves $(3)\Rightarrow (4)$. Finally, by Proposition \ref{n1B}, we have $(4)\Rightarrow (1)$.

\end{proof}


We end this section showing that $\bm_\omega$ is determined for a broad class of spaces.

\begin{definition}
A family $\mB$ of non-empty open sets in a topological space will be called a \textbf{$\pi$-base} if every non-empty open set contains at least one member of $\mB$. A $\pi$-base $\mB$ is called \textbf{locally countable} if each member of $\mB$ contains countably many members of $\mB$.
\end{definition}

\begin{proposition}[\cite{Oxtoby1961}]
	The cartesian product of a Baire space $X$ and a Baire space $Y$ having countable $\pi$-base is a Baire space.
\end{proposition}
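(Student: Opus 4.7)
The plan is to argue directly from the density-of-countable-intersections formulation of being a Baire space, without invoking any game characterization. Fix a countable $\pi$-base $\{V_k : k \in \omega\}$ for $Y$, a sequence $\{D_n : n \in \omega\}$ of dense open subsets of $X\times Y$, and a non-empty basic open set $U_0 \times W_0$; the goal is to produce a point of $(U_0 \times W_0)\cap\bigcap_{n\in\omega} D_n$. The key idea is to use the countable $\pi$-base to reduce the product situation to two successive applications of the Baire property: once in $X$ over a countable family, and once in the open subspace $W_0$ of $Y$.

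First I would isolate the relevant countable index set $I = \{k \in \omega : V_k \subseteq W_0\}$ and, for each $(n,k)$ with $k \in I$, define $U_{n,k} := \pi_X\bigl(D_n \cap (X \times V_k)\bigr)$. Each $U_{n,k}$ is open in $X$ since projections of open sets are open. Density in $X$ follows from density of $D_n$ in $X \times Y$: given any non-empty open $U \subseteq X$, the set $U \times V_k$ is non-empty open, so $D_n \cap (U \times V_k) \neq \emptyset$ projects into $U \cap U_{n,k}$. Because $X$ is Baire and $\omega \times I$ is countable, $\bigcap_{(n,k)\in \omega\times I} U_{n,k}$ is dense in $X$, and hence I can select a point $x \in U_0 \cap \bigcap_{(n,k)} U_{n,k}$.

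With $x$ fixed, I turn to the vertical sections $D_n^{x} := \{y \in Y : (x,y) \in D_n\}$, which are open in $Y$. The claim is that each $D_n^{x}$ is dense in $W_0$: for any non-empty open $V' \subseteq W_0$, the $\pi$-base property yields some $V_k \subseteq V'$ (necessarily with $k \in I$), and membership $x \in U_{n,k}$ furnishes $y \in V_k$ with $(x,y) \in D_n$, i.e.\ $y \in D_n^{x} \cap V'$. Since $W_0$ is an open subspace of the Baire space $Y$, it is itself Baire, so $W_0 \cap \bigcap_n D_n^{x} \neq \emptyset$, and any point $y$ in this intersection gives $(x,y) \in (U_0\times W_0) \cap \bigcap_n D_n$.

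There is no real obstacle here, only a mild subtlety: the countable $\pi$-base plays a dual role, ensuring on the one hand that the family $\{U_{n,k}\}$ indexed by $\omega \times I$ is countable (so Baireness of $X$ applies) and on the other hand that every non-empty open $V' \subseteq W_0$ can be witnessed by some $V_k$ (so the sections $D_n^x$ are dense in $W_0$). The only background fact used beyond the Baire property of $X$ and $Y$ is the standard observation that open subspaces of Baire spaces are Baire.
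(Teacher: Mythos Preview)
Your proof is correct. Note, however, that the paper does not supply its own proof of this proposition: it is simply quoted from Oxtoby's 1961 paper \cite{Oxtoby1961}, so there is no in-paper argument to compare against. What you have written is in fact the standard Kuratowski--Ulam-style proof and matches Oxtoby's original approach: the countable $\pi$-base on $Y$ lets you replace the product problem by a \emph{countable} family $\{U_{n,k}\}$ of dense open sets in $X$, Baireness of $X$ then fixes a good point $x$, and Baireness of the open subspace $W_0\subseteq Y$ handles the vertical sections $D_n^x$. The two uses of the $\pi$-base you single out (keeping the index set countable, and witnessing density of the sections) are exactly the two places where the hypothesis is needed, and the only external fact invoked---that open subspaces of Baire spaces are Baire---is standard.
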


\begin{corollary}\label{cpbpb}
	If $X$ is a second countable Baire space then $X$ is productively Baire.
\end{corollary}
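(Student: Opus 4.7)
The plan is to apply the proposition attributed to Oxtoby quoted immediately above directly, using the fact that second countability gives a countable $\pi$-base. First I would observe that if $X$ is second countable, then any countable base of $X$ is in particular a countable $\pi$-base, since every non-empty open set contains some basic open set.

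Next, to verify productive Baireness, I would take an arbitrary Baire space $Y$ and show that $X \times Y$ is Baire. Since the product is (canonically homeomorphic to) $Y \times X$, and $Y$ is Baire while $X$ is a Baire space with a countable $\pi$-base, Oxtoby's proposition applies and yields that $Y \times X$, hence $X \times Y$, is Baire. Since $X$ itself was already assumed to be Baire, this is exactly the definition of productively Baire.

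There is essentially no obstacle here: the work is entirely done by Oxtoby's theorem, and the only check is the trivial observation that a countable base is a $\pi$-base. I would therefore keep the proof to a single short paragraph, perhaps pointing out that the same argument shows more generally that any Baire space with a countable $\pi$-base is productively Baire.
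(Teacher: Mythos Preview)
Your proposal is correct and matches the paper's intended argument: the corollary is stated immediately after Oxtoby's proposition with no written proof, so the one-line deduction you give (second countable $\Rightarrow$ countable $\pi$-base, then apply Oxtoby to $Y\times X$ for an arbitrary Baire $Y$) is exactly what the paper has in mind.
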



\begin{remark}\label{jogopibase}
Let $\mB$ be a $\pi$-base for the topology of the space $X$. Note that, in the $\bmc$ game on $X$, we can assume that, both \1 and \2 must necessarily choose elements from $\mB$ in their moves. This is also valid if we change $\bmc$ by $\bm$. This fact will be used freely from now on without mentioning it.
\end{remark}


\begin{theorem}
  If $X$ is a Baire space with a \textcolor{black}{locally} countable $\pi$-base, then \2 has a winning strategy on $\bmc(X)$.  
\end{theorem}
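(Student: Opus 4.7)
The plan is to exhibit a winning strategy for \2 by having her enumerate, at each inning, every $\pi$-base element lying below \1's current position, and then invoke the Baire property on the relevant open subspace to produce a point in the intersection. The fact that $\mB$ is only locally countable, rather than globally countable, is handled by confining the game inside a single element of $\mB$ chosen at the opening inning.

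After \1 opens with $A_0$, \2 picks some $B_0^{*}\in\mB$ with $B_0^{*}\subseteq A_0$ and fixes
\[\mB^{*} = \{V \in \mB : V \subseteq B_0^{*}\},\]
which is countable by local countability of $\mB$ and satisfies $\bigcup \mB^{*} = B_0^{*}$ because $B_0^{*}\in \mB^{*}$ itself. The strategy then prescribes, after \1's $n$-th move $\mA_n$ (with the convention $\mA_0=\{A_0\}$),
\[\mB_n = \{V \in \mB^{*} : V \subseteq \bigcup \mA_n\}.\]
Since $\bigcup\mA_n \subseteq B_0^{*}$, this is a subfamily of the countable set $\mB^{*}$, and it is non-empty because every non-empty open $A_V\in\mA_n$ contains some element of $\mB$, which then sits inside $\mB^{*}$; hence this is a legal move in $\bmc(X)$.

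The key step is to show by induction that each $B_n := \bigcup\mB_n$ is open and dense in $B_0^{*}$. The base $B_0 = B_0^{*}$ is trivial. Assuming $B_n$ is dense in $B_0^{*}$, let $W \subseteq B_0^{*}$ be non-empty open; then $W\cap B_n$ is non-empty open, so $\mB$ supplies some $V\in\mB$ with $V\subseteq W\cap B_n$. Because $V\subseteq B_n\subseteq \bigcup\mA_n$ and $V\subseteq B_0^{*}$, we get $V\in\mB_n$. Thus \1's response $A_V\subseteq V\subseteq W$ is non-empty open, and we can find $V' \in \mB$ with $V'\subseteq A_V$. This $V'$ lies in $\mB^{*}$ and inside $\bigcup\mA_{n+1}$, so $V'\in\mB_{n+1}$, giving $W\cap B_{n+1}\supseteq V' \neq \emptyset$.

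Finally, $B_0^{*}$ is open in the Baire space $X$ and is therefore Baire itself, so the decreasing sequence $(B_n)_{n\in\w}$ of open dense subsets has a dense intersection in $B_0^{*}$; in particular $\bigcap_{n\in\w} B_n \neq \emptyset$ and \2 wins. The main technical subtlety is keeping each $\mB_n$ countable without assuming a globally countable $\pi$-base; it is precisely the restriction to $B_0^{*}$ that turns local countability into exactly what is needed, while leaving enough $\pi$-base elements below $\bigcup\mA_n$ to enforce density at every inning.
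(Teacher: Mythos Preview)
Your proof is correct and the strategy you give for \2 is essentially the same one the paper uses (play all $\pi$-base elements sitting below \1's current position, confined inside a single element $B_0^{*}\in\mB$ to keep everything countable). The genuine difference is in how the strategy is \emph{verified}. You argue directly: an induction shows that each $B_n=\bigcup\mB_n$ is open and dense in the Baire subspace $B_0^{*}$, and then the Baire property of $B_0^{*}$ immediately gives $\bigcap_n B_n\neq\emptyset$. The paper instead argues by contradiction through the classical game: assuming the strategy loses against some run $\seq{\mA_n:n\in\w}$, it converts that run into a winning strategy for \1 in $\bm(X)$ (any $B\in\mB$ that \2 plays in $\bm(X)$ automatically lies in \2's move in $\bmc(X)$, so \1 can respond with the corresponding $A_B$ from the fixed run), contradicting Oxtoby's characterization of Baire spaces. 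Your route is the more self-contained one, since it does not invoke Oxtoby's theorem and uses only the raw definition of a Baire space; the paper's route, on the other hand, makes the equivalence between $\bm$ and $\bmc$ for \1 (their Corollary~3.3) do the heavy lifting, which fits the surrounding narrative but adds an extra layer of indirection.
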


\begin{proof}
  First, we will define a strategy $\sigma$ for \2 in the $\bmc(X)$ game using the fact that $X$ has a \textcolor{black}{locally} countable $\pi$-base $\mB$. 
  
 Indeed, in the first inning, \textcolor{black}{\1 plays $A_0$} be any element from $\mB$ and let $\mA_0 = \{A_0\}$ next \textcolor{black}{\2 responds $\sigma(\langle A_{0}  \rangle) = \{B \in \mB: B \subset A_0\}$}, note that this is a valid move since $\mB$ is locally countable. In the second inning, \textcolor{black}{\1 plays $\mA_{1}=\{A_{B}\in\mB : B\in \sigma(\langle A_{0}  \rangle) \}$} and \textcolor{black}{\2 responds $\sigma(\langle A_{0}, \mA_{1} \rangle) = \bigcup_{A\in \mathcal{A}_{1}}\{B \in \mB: B \subset A   \}$}, note that this is a valid move since $\mA_{1}$ is countable and $\mB$ is locally countable. 
 Then, if $\seq{\mA_n: n \leq k}$ are the plays of \1, define \textcolor{black}{$\sigma(\seq{\mA_0, ..., \mA_n}) = \bigcup_{A\in\mA_{n}}\{B \in \mB: B\subset A \}$}. This completes the definition of the strategy $\sigma$.

 \begin{center}
 	\begin{table}
 		\centering
 			\begin{tabular}{c|c}
 				\multicolumn{2}{c}{$\bm_\omega(X)$ }\\
 				\hline
 				\hline
 				\multicolumn{2}{c}{\textcolor{white}{$GS_{\lambda}(A,k)$}} \\
 				$\1$ & $\2$ \\ 
 				\hline
 				$\textcolor{black}{A_{0}}$ & \\
 				&  $\textcolor{black}{\sigma(\langle A_{0}  \rangle)=\{B \in \mB: B \subset A_0\}} $\\
 				$\textcolor{black}{\mA_{1}}$ & \\
 				& $\textcolor{black}{\sigma(\langle A_{0}, \mA_{1} \rangle) = \bigcup_{A\in \mathcal{A}_{1}}\{B \in \mB: B \subset A   \}   \}}$ \\
 				\textcolor{white}{a}\\
 				\vdots & \vdots
 			\end{tabular}
 	\end{table}
 \end{center}
 Assume by way of contradiction that $\sigma$ is not winning, then there is a sequence $\seq{\mA_n: n \in \w}$ of plays for \1 that played against $\sigma$ will turn in a win for \1 - \emph{i.e.} $\bigcap_{n \in \w} \bigcup \mA_n = \emptyset$. 

 Now we will define a winner strategy $\rho$ for \1 in the $\bm(X)$ game. 
 
 Indeed, in the first inning, \textcolor{black}{\1 plays $\rho(\seq{}) = A_0$}  next \textcolor{black}{\2 responds $B_0 \subset A_0$} be any element from $\mB$, note that $B_0\in\sigma(\seq{\mA_{0}})$. In the second inning, in $\bmc(X)$, by definition, there is an $A_1 \in \mA_1$ such that $A_1 \subset B_{0}$. Then, in $\bm(X)$, \textcolor{black}{\1 plays $\rho(\seq{B_{0}}) = A_1$} next \textcolor{black}{\2 plays $B_{1}$} be any element from $\mB$ contained in $A_1$, note that $B_{1}\in \sigma(\seq{A_{0},\mA_{1}})$, and so on. This completes the definition of the strategy $\rho$.


\begin{center}
	\begin{table}[h!]
		\centering
		\begin{tabular}{c|c}
			\multicolumn{2}{c}{$\bm(X)$ }\\
			\hline
			\hline
			\multicolumn{2}{c}{\textcolor{white}{$GS_{\lambda}(A,k)$}} \\
			$\1$ & $\2$ \\ 
			\hline
			$\textcolor{black}{\rho(\seq{})  =A_{0}}$ & \\
			&  $\textcolor{black}{B_{0}} $\\
			$\textcolor{black}{\rho(\seq{B_{0}})=A_{1}}$ & \\
			& $\textcolor{black}{B_{1}}$ \\
			\textcolor{white}{a}\\
			\vdots & \vdots
		\end{tabular}
	\end{table}
\end{center}

Let us prove now that $\rho$ is winning for \1 in $\bm(X)$. So fix a play $$\seq{A_0, B_0, A_1, B_1, ..., A_n, B_n, ...}$$ following $\rho$ (with notation similar to what we presented in the construction of $\rho$). We need to show that $\bigcap_{n \in \w} B_n = \emptyset$. In fact, note that for each $n\in\omega$, $B_{n}\subset \bigcup\mA_{n}$, then $\bigcap_{n\in\omega}B_{n}\subset\bigcap_{n \in \w}\bigcup\mA_{n}=\emptyset$. Thus, \1 wins the game.
\end{proof}


\begin{corollary}
	For a space $X$ with locally countable $\pi$-base, the game $\bmc(X)$ is determined.
\end{corollary}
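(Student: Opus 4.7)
The plan is simply to dichotomize on whether $X$ is Baire. Determinacy of $\bmc(X)$ means that one of the two players has a winning strategy, and the two preceding results already dispose of both horns.

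First, I would note that by the corollary characterizing Baire spaces (equivalence of (1) and (4) in the previous corollary), if $X$ is not Baire, then $\1$ has a winning strategy on $\bmc(X)$, so the game is determined. This case uses no hypothesis on the $\pi$-base at all.

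In the remaining case, $X$ is Baire, and the hypothesis of a locally countable $\pi$-base is exactly what is needed to apply the previous theorem, which yields a winning strategy for $\2$ on $\bmc(X)$. Combining the two cases, $\bmc(X)$ is determined whenever $X$ has a locally countable $\pi$-base.

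There is no real obstacle here; the work has been done in the preceding theorem (which supplies $\2$ with a strategy in the Baire case) and in the earlier corollary (which supplies $\1$ with a strategy in the non-Baire case). The only thing to observe is that these two results cover complementary cases and that their union is exhaustive. The proof is therefore a one-line dichotomy.
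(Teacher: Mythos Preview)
Your proposal is correct and is exactly the argument the paper has in mind; the corollary is stated without proof in the paper, as it follows immediately from the preceding theorem (for the Baire case) and the earlier equivalence $(1)\Leftrightarrow(4)$ (for the non-Baire case), precisely as you outline.
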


\section{A relation between $\bmfin$ and $\bmc$.}

In this part we will see that, assuming the Continuum Hypothesis, there is a space in which the games $\bmfin$ and $\bmc$ are different. We begin with a compilation of some known facts about subsets of the real line and we present some technical lemmas that will help us to conclude our objective.

\begin{lemma}
  Let $\mB$ be an infinite countable base for the topology on $\RR$. Then the set of strategies for \2 in the game $\bmfin(\RR)$ has cardinality at most $\mathfrak c$.
\end{lemma}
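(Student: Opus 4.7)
My plan is to reduce the problem to counting functions between two countable sets, using the hypothesis that $\mB$ is a countable base. The first move is to invoke (the obvious analog for $\bmfin$ of) Remark~\ref{jogopibase}, which allows us to assume that throughout any play of $\bmfin(\RR)$ both \1 and \2 choose their open sets from $\mB$: \2 can always refine each open set in her finite family to a basic open set contained in it without changing who wins, and then \1's legal next move is a finite collection of basic open subsets of those. In particular, for the purposes of counting, a strategy $\sigma$ for \2 is then a function whose domain is the collection of legal sequences $\langle A_0,\mA_1,\ldots,\mA_n\rangle$ with $A_0\in\mB$ and each $\mA_i\in[\mB]^{<\w}$, and whose values lie in $[\mB]^{<\w}$.

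Once that reduction is in place, the counting is a routine cardinal arithmetic computation. Since $\mB$ is countable, $[\mB]^{<\w}$ is countable, and the set of legal input sequences is a countable union (over $n\in\w$) of finite Cartesian products of countable sets, hence countable. The codomain $[\mB]^{<\w}$ is countable as well. Therefore the collection of all such functions has cardinality at most
\[
\bigl|[\mB]^{<\w}\bigr|^{|\w|} \leq \w^\w = \fc,
\]
which is precisely the desired bound.

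The main obstacle is making the first step rigorous. A priori, \1 is entitled to play arbitrary non-empty open subsets of $\RR$, and there are $\fc$ of them, so naively the domain of $\sigma$ has cardinality $\fc$, the codomain has cardinality $\fc$, and the crude bound is $\fc^\fc=2^\fc$. The key point is that \2's winning behavior only needs to be specified on plays from $\mB$: if \1 plays an arbitrary open set, there is a basic open set nested inside it, and \2's strategy on the basic play can be extended to the arbitrary play by refining her response below her chosen basic output, with no effect on the outcome. This lets us identify each strategy with its restriction to $\mB$-plays, which is what the counting above actually enumerates. Once this identification is accepted, the cardinality bound $\fc$ follows immediately.
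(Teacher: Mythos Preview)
Your proposal is correct and follows essentially the same approach as the paper: invoke the $\pi$-base reduction (the $\bmfin$ analog of Remark~\ref{jogopibase}) so that both players may be assumed to play only from $\mB$, and then count functions from a countable set of legal histories into the countable set $[\mB]^{<\w}$, obtaining the bound $\w^\w=\fc$. The paper's proof is terser---it simply declares that a strategy is a function $\sigma:S\subseteq\mB^{<\w}\to\mB^{<\w}$ and computes---whereas you spell out more carefully why the restriction to $\mB$ is legitimate and acknowledge the naive $2^\fc$ bound; but the substance is identical.
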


\begin{proof}
Remember that a strategy for \2 on $\bmfin(\RR)$, where both players only play open sets from $\mathcal{B}$, is a function $$\sigma:S\subseteq \mB^{<\omega}\to \mB^{<\omega}$$
As $|\mB^{<\omega}|=|\mB|=\omega$, we have that $|S|\leq \omega$, on the other hand, note that $ \{ \seq{B} : B\in \mB  \} \subseteq S$, so $|S|=\omega$. Finally,  $|\{ \sigma \in \mB^{S}: \sigma\hspace{0.1cm}\mbox{is strategy for}\hspace{0.1cm}\2  \}|\leq |\mB|^{|S|}=\omega^{\omega}=\mathfrak{c}$.

\end{proof}

\begin{definition}
Let $A\subseteq \RR$, we say that $A$ is 	
\begin{itemize}
	\item \textbf{nowhere dense}, if $\text{int}\hspace{0.05cm}(\overline{A})=\emptyset$. 
	\item \textbf{meager}, if $A$ is a countable union of nowhere dense sets.
	\item \textbf{residual}, if A is the complement of a meager set.
\end{itemize}
	
\end{definition}

\begin{theorem}[Oxtoby]
Let $A\subseteq \RR$, then $A$ is residual if and only if $A$ contains a dense $G_{\delta}$ subset of $\RR$.
\end{theorem}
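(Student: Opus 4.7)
The plan is to prove the two implications separately, using the standard duality between dense $G_\delta$ sets and complements of meager $F_\sigma$ sets. The key observation is that a closed nowhere dense set and a dense open set are complements of each other, and this bridges the $G_\delta$/meager dichotomy.

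For the ``if'' direction, I would suppose $A \supseteq G$ with $G = \bigcap_{n \in \w} U_n$ a dense $G_\delta$, where each $U_n$ is open. Since $G$ is dense and $G \subseteq U_n$, each $U_n$ is dense, so each complement $\RR \setminus U_n$ is closed with empty interior, hence nowhere dense. Then
\[\RR \setminus A \subseteq \RR \setminus G = \bigcup_{n \in \w}(\RR \setminus U_n)\]
is meager, and therefore $A$ is residual.

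For the ``only if'' direction, I would start from a decomposition $\RR \setminus A = \bigcup_{n \in \w} N_n$ with each $N_n$ nowhere dense. Replacing $N_n$ by $\overline{N_n}$ (still nowhere dense), I would set $U_n = \RR \setminus \overline{N_n}$, which is open and dense. Then $G = \bigcap_{n \in \w} U_n$ is a $G_\delta$ contained in $A$, and it is dense by the Baire category theorem applied to $\RR$ (which is a complete metric space, hence Baire). This gives the required dense $G_\delta$ inside $A$.

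The argument is essentially routine once one unwinds the definitions; the only substantive ingredient is the Baire category theorem for $\RR$, invoked to guarantee density of the intersection in the second direction. I do not foresee any real obstacle, but it is worth noting that closing up the $N_n$ is important so that $U_n$ is actually open — writing $U_n = \RR \setminus N_n$ directly would only yield a dense set, not an open dense set.
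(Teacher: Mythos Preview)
Your argument is correct and is the standard proof of this classical fact. Note, however, that the paper does not actually supply a proof of this statement: it is recorded as a known theorem attributed to Oxtoby and is simply quoted for later use. There is therefore nothing in the paper to compare your approach against; your write-up would serve perfectly well as the omitted justification.
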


Remember the following cardinalities of the following families of subsets of the real line:

\begin{itemize}
	
	
	
	\item  $|  \{M\subseteq \RR : M \hspace{0.1cm}\text{is meager}\}|=2^{\mathfrak{c}}$.
	
	\item $\mathcal{G}_{\delta}=\{G\subseteq \RR : G \hspace{0.1cm}\text{is a}\hspace{0.1cm} G_{\delta}\text{-set} \}$, then $|\mathcal{G}_{\delta}|=\mathfrak{c}$.
	\item $\mathcal{F}_{\sigma}=\{F\subseteq \RR : F \hspace{0.1cm}\text{is a}\hspace{0.1cm} F_{\sigma}\text{-set} \}$, then $|\mathcal{F}_{\delta}|=\mathfrak{c}$.
	\item $\mathcal{D}\mathcal{G}_{\delta}=\{D\subseteq \RR : D \hspace{0.1cm}\text{is a dense}\hspace{0.1cm} G_{\delta}\text{-set} \}$, then $|\mathcal{D}\mathcal{G}_{\delta}|=\mathfrak{c}$, because $\{\mathbb{R}\setminus\{x\}:x\in\RR   \}\subseteq \mathcal{D}\mathcal{G}_{\delta}$.
    \item Let $\mB=\{B_{n} :n\in\omega  \}$ be a basis for the topology on $\RR$, then the set $\{G\in \mathcal{G}_{\delta} \mid \exists n\in\omega : G\subseteq B_{n} \subseteq \overline{G}  \}$ has cardinality $\mathfrak{c}$, because for each $n\in \omega$, $\{B_{n}\setminus\{b\} : b\in B_{n}\} \subseteq \{G\in \mathcal{G}_{\delta} \mid \exists n\in\omega : G\subseteq B_{n} \subseteq \overline{G}  \}   $.
\end{itemize}


\begin{lemma}\label{easytec}
	Let $\mB=\{B_{n} :n\in\omega  \}$ be a basis for the topology on $\RR$. For each $G\in \mathcal{D}\mathcal{G}_{\delta}$ and $m\in\omega$, we have $G\cap B_{m} \in \{G\in \mathcal{G}_{\delta} \mid \exists n\in\omega : G\subseteq B_{n} \subseteq \overline{G}  \}$.
\end{lemma}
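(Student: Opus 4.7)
The plan is to take $n = m$, i.e.\ to show that $B_m$ itself witnesses the membership of $G\cap B_m$ in the set on the right.

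First I would check the easy parts. The set $G\cap B_m$ is a $G_\delta$ set: $G$ is $G_\delta$ by assumption, $B_m$ is open and hence a (trivial) $G_\delta$, and the intersection of two $G_\delta$ subsets of $\RR$ is $G_\delta$. The inclusion $G\cap B_m\subseteq B_m$ is immediate.

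The only nontrivial step is the inclusion $B_m\subseteq \overline{G\cap B_m}$. For this I would use that $G$ is a \emph{dense} $G_\delta$ in $\RR$. Concretely, pick any point $x\in B_m$ and any open neighborhood $U$ of $x$; replacing $U$ by $U\cap B_m$ we may assume $U\subseteq B_m$ and $U$ is non-empty open. By density of $G$ in $\RR$, $U\cap G\neq\emptyset$, and since $U\subseteq B_m$ this intersection is contained in $G\cap B_m$. Hence every neighborhood of $x$ meets $G\cap B_m$, so $x\in\overline{G\cap B_m}$.

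Combining, $G\cap B_m$ is a $G_\delta$ set with $G\cap B_m\subseteq B_m\subseteq\overline{G\cap B_m}$, proving the claim. There is no real obstacle here; the statement is essentially a bookkeeping lemma that records the fact that intersecting a dense $G_\delta$ with a basic open set produces a $G_\delta$ which is dense in that basic open set.
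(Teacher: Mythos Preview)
Your proof is correct and follows essentially the same approach as the paper: both take $n=m$ and verify that $G\cap B_m$ is $G_\delta$, that $G\cap B_m\subseteq B_m$, and that $B_m\subseteq\overline{G\cap B_m}$ using the density of $G$. The only cosmetic difference is that the paper phrases the last step as a proof by contradiction while you give the direct argument.
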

\begin{proof}
	Note that $G\cap B_{m}  \in \mathcal{G}_{\delta}$ and $G\cap B_{m}\subseteq B_{m}$ . Also, $B_{m}\subseteq \overline{G\cap B_{m}}$. In fact, otherwise, there is a $z\in B_{m}$ such that $z\not\in \overline{G\cap B_{m}}$, so there is an $n\in \omega$ with $z\in B_{n}$ such that   $B_{n}\cap B_{m}\cap G=\emptyset$, contradiction, because $G$ is dense.

\end{proof}

\begin{proposition}\label{hrusak}
The following properties of a topological space $X$ are equivalent:
	
\begin{enumerate}
	\item [(1)] $X$ is a Baire space.
 	\item [(2)] For every countable closed cover $\{H_{n} : n\in\omega   \}$ of $X$, the set $\bigcup_{n\in\omega}\text{int}(H_{n}) $ is dense in $X$.
	\item [(3)] For every sequence $V_{0}, V_{1}, \cdots$ of open sets with the same closure $K$, we have $K=\overline{\bigcap_{n\in\omega}V_{n}}$.
	\item [(4)] Every meager $G_{\delta}$-set in $X$ is nowhere dense. 
	\item [(5)] Every meager set has empty interior.
	
\end{enumerate}		
\end{proposition}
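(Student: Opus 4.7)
The plan is to pivot everything through condition (1), establishing (1) $\Leftrightarrow$ (5), then (5) $\Rightarrow$ (4) $\Rightarrow$ (1), then (1) $\Leftrightarrow$ (2), and finally (1) $\Leftrightarrow$ (3). Each of (2)--(5) is really a reformulation of the statement ``no nonempty open set is meager,'' so the work is to convert between its dense-open, closed-cover, and $G_\delta$ forms.

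For (1) $\Leftrightarrow$ (5) I will replace each nowhere dense piece $F_n$ of a meager $M$ with its closure and invoke Baireness of the dense open sets $X \setminus \overline{F_n}$ to conclude $\text{int}(M) = \emptyset$; conversely, if dense opens $D_n$ witness failure of Baireness through a nonempty open $U$ with $U \cap \bigcap_n D_n = \emptyset$, then $U = \bigcup_n (U \setminus D_n)$ exhibits $U$ as a meager open set, contradicting (5). For (5) $\Rightarrow$ (4) I take a meager $G_\delta$ set $G = \bigcap_n U_n$ with a nonempty open $V \subseteq \overline{G}$, note each $U_n$ is dense in $V$, and argue that each $V \setminus U_n$ is nowhere dense in $X$; then $V = (V \cap G) \cup \bigcup_n (V \setminus U_n)$ is a meager open set, again contradicting (5). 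The implication (4) $\Rightarrow$ (1) is short: any open set is trivially a $G_\delta$, so the meager open set produced by failure of Baireness is a meager $G_\delta$ whose closure has nonempty interior, contradicting (4).

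For (1) $\Leftrightarrow$ (2), the forward direction uses that open subspaces of Baire spaces are Baire: if a nonempty open $U$ were disjoint from $\bigcup_n \text{int}(H_n)$, then $\text{int}_U(U \cap H_n) = U \cap \text{int}(H_n) = \emptyset$ would make $U$ meager in itself. For (2) $\Rightarrow$ (1), given dense opens $D_n$ and an open $U$ with $U \cap \bigcap_n D_n = \emptyset$, I apply (2) to the closed cover $\{X \setminus U\} \cup \{X \setminus D_n : n \in \w\}$: each $X \setminus D_n$ is nowhere dense and $\text{int}(X \setminus U) = X \setminus \overline{U}$ misses $U$, so $\bigcup_n \text{int}(H_n) = X \setminus \overline{U}$ fails to be dense. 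Finally, (3) $\Rightarrow$ (1) follows by taking $V_n = D_n$ for dense opens $D_n$ (all with closure $X$), while (1) $\Rightarrow$ (3) works inside the open Baire subspace $V_0$: since $\overline{V_n} = K \supseteq V_0$, each $V_n \cap V_0$ is dense open in $V_0$, and Baireness of $V_0$ forces $\bigcap_n V_n$ to be dense in $V_0$, whence $K = \overline{V_0} \subseteq \overline{\bigcap_n V_n}$; the reverse inclusion is immediate from $\bigcap_n V_n \subseteq V_0 \subseteq K$.

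The main obstacle I anticipate is showing in (5) $\Rightarrow$ (4) that $V \setminus U_n$ is nowhere dense \emph{in $X$} rather than merely in $V$. The trick is that any open $W \subseteq \overline{V \setminus U_n}$ cannot meet $V$ (because $W$ is disjoint from $U_n$ but $U_n$ is dense in $V$), so $W \subseteq \overline{V} \setminus V = \partial V$; and since $V$ is open, $\partial V$ is a closed set with empty interior (every nonempty open subset of $\overline{V}$ must meet $V$), forcing $W = \emptyset$.
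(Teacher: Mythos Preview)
The paper does not actually prove this proposition: it is stated without proof as a known collection of equivalences (the label \texttt{hrusak} suggests it is quoted from the literature), and only the implication $(1)\Rightarrow(4)$ is later invoked. So there is no ``paper's own proof'' to compare against.

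Your argument is correct. The chain $(1)\Leftrightarrow(5)$, $(5)\Rightarrow(4)\Rightarrow(1)$, $(1)\Leftrightarrow(2)$, $(1)\Leftrightarrow(3)$ closes cleanly, and each step checks out. In particular, the point you flagged as the main obstacle --- that in $(5)\Rightarrow(4)$ the sets $V\setminus U_n$ are nowhere dense in $X$ rather than merely in $V$ --- is handled correctly: from $W\subseteq\overline{V\setminus U_n}\subseteq X\setminus U_n$ and $U_n$ dense in $V$ you get $W\cap V=\emptyset$, hence $W\subseteq\overline V\setminus V$, and any nonempty open subset of $\overline V$ must meet $V$, so $W=\emptyset$. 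The uses of ``open subspaces of Baire spaces are Baire'' in $(1)\Rightarrow(2)$ and $(1)\Rightarrow(3)$ are standard and legitimate. The one stylistic remark is that your proof of $(1)\Rightarrow(2)$ silently appeals to the already-established $(1)\Rightarrow(5)$ inside the subspace $U$; this is fine given your ordering, but worth saying explicitly.
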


Also, every topological space which has a dense Baire subspace is evidently a Baire
space. The converse is not true: for instance, the real line is a Baire space but the
subspace of rationals is not.	




\begin{lemma}\label{densoBairechar}
	Let $X\subseteq\RR$ be dense, then 
	$X$ is Baire if and only if $G\cap X$ is dense in $X$, for each $G\in\mathcal{D}\mathcal{G}_{\delta}$.
\end{lemma}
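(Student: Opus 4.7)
The plan is to prove both directions by translating between openness/density in $X$ and in $\RR$, using crucially that $X$ is dense in $\RR$ and that $\RR$ itself is a Baire space.

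For the forward direction, assume $X$ is Baire and let $G = \bigcap_{n\in\omega} U_n$ where each $U_n$ is open and dense in $\RR$. I would first check that each $U_n \cap X$ is open and dense in $X$: openness is immediate, and density follows because for any nonempty open $W \subseteq \RR$ the set $U_n \cap W$ is a nonempty open subset of $\RR$, hence meets the dense set $X$, giving $U_n \cap W \cap X \neq \emptyset$. Since $X$ is Baire, $\bigcap_{n\in\omega} (U_n \cap X) = G \cap X$ is dense in $X$.

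For the backward direction, assume the $G_\delta$ condition and let $\seq{V_n : n \in \omega}$ be a sequence of open dense subsets of $X$. Write each $V_n = U_n \cap X$ with $U_n$ open in $\RR$. The key step is to argue that $U_n$ is dense in $\RR$: for any nonempty open $W \subseteq \RR$, the density of $X$ gives a nonempty open subset $W \cap X$ of $X$, and density of $V_n$ in $X$ then gives $V_n \cap W \cap X \neq \emptyset$, which in particular forces $U_n \cap W \neq \emptyset$. By the Baire category theorem applied to $\RR$, the intersection $G := \bigcap_{n \in \omega} U_n$ is a dense $G_\delta$ subset of $\RR$. By the hypothesis, $G \cap X$ is dense in $X$; but $G \cap X = \bigcap_{n \in \omega} V_n$, so the intersection of the $V_n$ is dense in $X$, proving that $X$ is Baire.

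I do not expect a real obstacle here: the only subtle point is being careful that ``dense in $X$'' and ``dense in $\RR$'' behave symmetrically through the map $U \mapsto U \cap X$ whenever $X$ is dense in $\RR$, and this is a short topological check. The Baireness of $\RR$ in the backward direction is what supplies the passage from ``each $U_n$ dense open'' to ``$G$ dense $G_\delta$'', which is needed to feed into the hypothesis.
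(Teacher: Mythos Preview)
Your proof is correct and follows essentially the same approach as the paper: both directions proceed by translating dense open sets between $X$ and $\RR$ via $U \mapsto U \cap X$, using that $X$ is dense in $\RR$ for the translations and the Baireness of $\RR$ for the backward implication. Your write-up is in fact slightly more detailed in justifying the density checks than the paper's version.
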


\begin{proof}
Suppose that $X$ is Baire and let $G=\bigcap_{n \in \omega}G_{n}$ be a dense $G_{\delta}$-set in $\RR$. Then for each $n\in\omega$, $G_{n}$ is an open dense set in $\RR$, therefore $G_{n}\cap X$ is an open dense set in $X$. As $X$ is Baire. Then $\bigcap_{n \in \omega}G_{n}\cap X=G\cap X$ is dense in $X$.

Now, let $\langle A_{n}:n\in\omega\rangle$ be a sequence of open dense sets in $X$. Then $A_{n}=A^{\prime}_{n}\cap X	$ with $A^{\prime}_{n}$ a non-empty open set in $\RR$ and, as $A_{n}$ is dense in $X$, $A^{\prime}_{n}$ is open and dense in $\RR$. Therefore $\bigcap_{n \in \omega} A^{\prime}_{n}$ is a dense $G_{\delta}$-set in $\RR$, then by hypothesis, $\bigcap_{n \in \omega}A_{n}=\bigcap_{n \in \omega}A^{\prime}_{n}\cap X$ is dense in $X$.

\end{proof}

\begin{definition}
	Given $\sigma$ a strategy for \2 in the $\bmfin(\RR)$ and $s = \seq{\mA_n: n \in \w}$ a sequence of finite collections of open sets, we define $\sigma * s$ as the set
	\[\bigcap_{n \in \w} \bigcup \sigma(\seq{\mA_0, ..., \mA_n}).\]
	
\end{definition}

We are assuming that if we use this notation, then $\sigma$ and the sequence are compatible, in the sense that the previous intersection is well defined.

\begin{lemma}\label{lematec}
		Let $\sigma$ be a strategy for $\2$ in the $\bmfin({\mathbb R})$, $X$ be a countable set of points in $\RR$ and $\mathcal{Y}$ be a countable family of nowhere dense sets in $\RR$. Then there exists a sequence $s$ of finite collections of basic open sets (which will be $\1$'s choices) such that $\sigma*s$ is a closed nowhere dense set disjoint from $X\cup \bigcup \mathcal{Y}$.
\end{lemma}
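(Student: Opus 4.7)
The plan is to build Alice's sequence of moves $s = \seq{\mA_n}_{n\in\w}$ recursively so that three things hold at every inning: (i) each of Alice's new open intervals has closure strictly inside the corresponding open set of $\mB_n$, (ii) the diameters of her intervals shrink to zero, and (iii) at inning $n$ she avoids the $n$-th point of $X$ and the $n$-th set of $\mathcal{Y}$. Together, (i) will force $\sigma \ast s$ to be closed, (ii) will force it to have empty interior, and (iii) will make it miss $X \cup \bigcup \mathcal{Y}$.

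Enumerate $X = \{x_n : n \in \w\}$ and $\mathcal{Y} = \{Y_n : n \in \w\}$, repeating entries if either is finite. By Lemma~\ref{lema disjoint open}, assume that $\sigma$ always outputs pairwise disjoint families. Let $\mA_0 = \{A_0\}$ for any basic open interval $A_0$ of diameter less than $1$. Inductively, having chosen $\mA_0, \ldots, \mA_n$, put $\mB_n = \sigma(\seq{\mA_0, \ldots, \mA_n})$; for each $B \in \mB_n$, use that $\overline{Y_n}$ is closed with empty interior (hence $B \setminus (\overline{Y_n} \cup \{x_n\})$ is a non-empty open set) to pick a basic open interval $A_B$ of diameter less than $1/(n+1)$ with $\overline{A_B} \subset B \setminus (\overline{Y_n} \cup \{x_n\})$, and set $\mA_{n+1} = \{A_B : B \in \mB_n\}$. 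Writing $B_n = \bigcup \mB_n$, the resulting set is $\sigma \ast s = \bigcap_{n \in \w} B_n$.

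For the verification, closedness follows because the finiteness of $\mA_{n+1}$ together with the closure condition gives $\overline{B_{n+1}} \subseteq \overline{\bigcup \mA_{n+1}} = \bigcup_{B \in \mB_n} \overline{A_B} \subseteq B_n$, whence $\bigcap_n \overline{B_n} \subseteq \bigcap_n B_n$ and so $\sigma \ast s = \bigcap_n \overline{B_n}$, an intersection of closed sets. Nowhere-denseness follows because $\mA_{n+1}$ is a pairwise disjoint family of open intervals each of diameter less than $1/(n+1)$, so by connectedness any non-empty open interval in $B_{n+1} \subset \bigcup \mA_{n+1}$ is contained in a single $A_B$; since this bounds the length of such an interval by $1/(n+1)$ for every $n$, the interior of $\sigma \ast s$ in $\RR$ must be empty. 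Disjointness from $X \cup \bigcup \mathcal{Y}$ is immediate from the construction: for each $m \in \w$, neither $x_m$ nor any point of $Y_m$ lies in $\bigcup \mA_{m+1} \supseteq B_{m+1} \supseteq \sigma \ast s$.

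The only delicate point I expect is the closedness claim, which rests crucially on the finiteness of Alice's families (so that the closure of a union equals the union of closures) and on her systematically choosing sets with properly nested closures; all other constraints are achieved by routine enumeration of countable data and straightforward shrinking of diameters inside non-empty open sets.
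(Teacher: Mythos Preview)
Your argument is correct and arguably cleaner than the paper's, but the route to nowhere-density is genuinely different. The paper never controls diameters; instead, at inning $n$ it fixes a witness point $z_n$ inside the $n$-th basic open set $B_n$ (and outside the forbidden data) and makes Alice's family $\mA_n$ avoid $z_n$, so that $B_n \not\subseteq \bigcup \mA_n$ for every $n$ and hence no basic open set can sit inside $\sigma*s$. Your diameter-shrinking device is closer in spirit to the Cantor-set construction of Proposition~\ref{2bmfinberstein} and avoids that bookkeeping, at the cost of needing $\mA_{n+1}$ pairwise disjoint so that connectedness pins any subinterval of $\bigcup\mA_{n+1}$ inside a single $A_B$.

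One small imprecision: you invoke Lemma~\ref{lema disjoint open} to assume $\sigma$ outputs disjoint families, but that lemma (as proved) passes from a \emph{winning} strategy $\sigma$ to a disjoint winning strategy $\hat\sigma$, whereas here $\sigma$ is an arbitrary strategy and you must produce $s$ for the given $\sigma$, not for a modified $\hat\sigma$. The fix is painless. Either observe that the construction in Lemma~\ref{lema disjoint open} yields $\hat\sigma * s = \sigma * s$ for any $s$ compatible with $\hat\sigma$ (by the nesting $\bigcup\mA_{n+1} \subseteq \bigcup\hat\sigma(\cdots) \subseteq \bigcup\sigma(\cdots) \subseteq \bigcup\mA_n$), or---more directly---drop the appeal to Lemma~\ref{lema disjoint open} altogether and have Alice enforce disjointness of $\mA_{n+1}$ herself: first choose distinct points $p_B \in B \setminus(\overline{Y_n}\cup\{x_n\})$ for $B \in \mB_n$, then take small disjoint intervals around them. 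Either way your verification goes through unchanged.
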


\begin{proof}
Let $X=\{x_{n}:n\in\omega \}$, $\mathcal{Y}=\{Y_{n}:n\in\omega   \}$ and fix $\mB=\{B_{n}: n\in\omega   \}$ be a basis for $\RR$.

Let $x_{0}\in X$, and note that $(\RR\setminus\{ x_{0} \}) \cap (\RR\setminus \overline{Y_{0}})\cap B_{0}$ is a non-empty open set. So consider $z_{0}\in (\RR\setminus\{ x_{0} \}) \cap (\RR\setminus \overline{Y_{0}})\cap B_{0}$ and note that $(\RR\setminus\{ z_{0} \})\cap (\RR\setminus\{ x_{0} \}) \cap (\RR\setminus \overline{Y_{0}})\cap B_{0}$ is a non-empty open set. Then choose $\mA_0=\{A_0\}$ such that $\overline{A_{0}}\subseteq  (\RR\setminus\{ z_{0} \})\cap (\RR\setminus\{ x_{0} \}) \cap (\RR\setminus \overline{Y_{0}})\cap B_{0}$ and note that $x_{0}, z_{0}\not\in A_{0}$.

Now, in the game $\bmfin(\RR)$, we have that in the first inning $\1$ can play $A_{0}$. Note that $B_{0}\not\subseteq A_{0}$ and $(\{x_{0}\}\cup Y_{0}) \cap A_{0}=\emptyset$. Next $\2$ responds $\sigma(\langle A_{0}  \rangle)=\{B^{0}_{0}, \cdots, B^{0}_{n_{0}}\}$.

Let $x_{1}\in X$ and $j\in\{0,\cdots, n_{0} \}$, consider $B^{0}_{j}$ and note that $(\RR\setminus \{x_{0}  \})\cap (\RR\setminus\overline{Y_{0}} ) \cap (\RR\setminus \{x_{1}  \}) \cap  (\RR\setminus\overline{Y_{1}} ) \cap B_{1}$ is a non-empty open set. So let $z_{1} \in (\RR\setminus \{x_{0}  \})\cap (\RR\setminus\overline{Y_{0}} ) \cap (\RR\setminus \{x_{1}  \}) \cap  (\RR\setminus\overline{Y_{1}} ) \cap B_{1}$ and note that $(\RR\setminus \{z_{1} \}) \cap (\RR\setminus \{x_{0}  \})\cap (\RR\setminus\overline{Y_{0}} ) \cap (\RR\setminus \{x_{1}  \}) \cap  (\RR\setminus\overline{Y_{1}} ) \cap B^{0}_{j}$ is a non-empty open set. Then choose $A^{1}_{j}\in\mB$
such that $\overline{A^{1}_{j}}  \subseteq (\RR\setminus \{z_{1} \}) \cap (\RR\setminus \{x_{0}  \})\cap (\RR\setminus\overline{Y_{0}} ) \cap (\RR\setminus \{x_{1}  \}) \cap  (\RR\setminus\overline{Y_{1}} ) \cap B^{0}_{j}$ and note that for each $j\in\{0, \cdots, n_{0} \} $, $z_{1}\not\in A^{1}_{j}$.

Then in the second inning, $\1$ plays $\mA_{1}=\{A^{1}_{0}, \cdots, A^{1}_{n_{0}}\}$ and note that $B_{1}\not\subseteq \bigcup \mathcal{A}_{1}$ and $(\{x_{0}, x_{1}\} \cup Y_{0}\cup Y_{1}) \cap \bigcup \mA_{1}=\emptyset$. Next $\2$ responds $\sigma(\langle A_{0}, \mA_{1}\rangle) = \{ B^{1}_{0}, \cdots, B^{1}_{n_{1}}\} $.

In general, for each inning $m\in\omega$, suppose $\2$ played $\{B^{m-2}_{0}, \cdots, B^{m-2}_{n_{m}}\}$ in the last inning. Let $x_{m}\in X$ and $j\in\{0, \cdots, n_{m}  \}$, consider $B^{m-2}_{j}$ and note that,      
$$ \bigcap_{i=0}^{j} (\RR\setminus \{x_{i}   \}    )  \cap  \bigcap_{i=0}^{j} (\RR\setminus \overline{Y_{i}}) \cap B_{j}$$ is a non-empty open set. So put $z_{m}\in \bigcap_{i=0}^{j} (\RR\setminus \{x_{i}   \}    )  \cap  \bigcap_{i=0}^{j} (\RR\setminus \overline{Y_{i}}) \cap B_{j}      $ and note that $(\RR\setminus \{z_{m}\})\cap \bigcap_{i=0}^{j} (\RR\setminus \{x_{i}   \}    )  \cap  \bigcap_{i=0}^{j} (\RR\setminus \overline{Y_{i}}) \cap B^{m-2}_{j}$ is a non-empty open set. Then fix $A^{m-1}_{j}\in\mB$ such that 
$$\overline{A^{m-1}_{j}}\subseteq(\RR\setminus \{z_{m}\})\cap \bigcap_{i=0}^{j} (\RR\setminus \{x_{i}   \}    )  \cap  \bigcap_{i=0}^{j} (\RR\setminus \overline{Y_{i}}) \cap B^{m-1}_{j}      $$
Then in the inning $m$, $\1$ plays $\mA_{m-1}=\{A^{m-1}_{0}, \cdots, A^{m-1}_{n_{m}}\}$ and note that $$B_{m-1}\not\subseteq \bigcup \mA_{m-1} \hspace{0.1cm}\text{and}  \hspace{0.1cm}(\{x_{0}, \cdots, x_{m-1}\}\cup  \bigcup_{i=0}^{m-1} Y_{i})\cap \bigcup \mA_{m-1} =\emptyset  $$

For each $n\in\omega$, denote $\tilde{\mA}_{n}=\{\overline{A} : A\in \mA_{n}\}$, note that $\bigcup \tilde{\mA}_{n}$ is a closed set, because $\mA_{n}$ is finite. 

\begin{claim}
Let $s=\langle \mA_{n} : n\in \omega\rangle$. Then $\sigma*s$ is a closed set with empty interior.
\end{claim}
	\begin{proof}
	\textcolor{white}{text}	
	\begin{enumerate}
		\item $\sigma*s$ is closed.
		
		In fact, we will show that $\sigma*s=\bigcap_{n\in\omega}\bigcup \tilde{\mathcal{A}}_{n}$.
		
		$(\subseteq)$ Let $z\in \sigma*s = \bigcup_{n\in\omega} \sigma(\langle A_{0}, \cdots, \mA_{n}\rangle)$. Then there is a $B^{n}_{j}\in \sigma(\langle A_{0}, \cdots, \mA_{n}\rangle)$ such that $z\in B^{n}_{j}\subseteq \bigcup \mA_{n} \subseteq \bigcup \tilde{\mA}_{n}$.

		$(\supseteq)$ Let $z\in \bigcap_{n\in\omega}\bigcup \tilde{\mathcal{A}}_{n}$. Then for every $n\in\omega$, we have $z\in \overline{A^{n+1}_{j}}\subseteq B^{n}_{j} \subseteq \bigcup \sigma(\langle A_{0}, \cdots, \mA_{n} \rangle)$.

		\item $\sigma*s$ has empty interior.
		
		Suppose otherwise, then there exists $n_{0}\in \omega$ such that  $B_{n_0}\subseteq \sigma*s = \bigcap_{n\in\omega}\bigcup \tilde{\mathcal{A}}_{n}\subseteq \bigcap_{n\in\omega}\bigcup \mathcal{A}_{n}\subseteq \bigcup \mA_{n_{0}}$, contradiction by our construction.
	\end{enumerate}

\end{proof}

\begin{claim}
	$(\sigma*s) \cap (X\cup \bigcup\mathcal{Y} ) =\emptyset$
\end{claim}
\begin{proof}
Suppose otherwise, there exists $$z\in (\sigma*s) \cap (X\cup \bigcup\mathcal{Y}) = \bigcap_{n \in \w} \bigcup \sigma(\seq{\mA_0, ..., \mA_n}) \cap (X\cup \bigcup\mathcal{Y})$$ Then there exists $n\in\omega$ such that $$z\in (\{x_{0}, \cdots, x_{n}\}  \cup \bigcup_{j=0}^{n}Y_{j}  ) \cap \bigcup \mA_{n},$$
but by our construction $ (\{x_{0}, \cdots, x_{n}\}  \cup \bigcup_{j=0}^{n}Y_{j}  ) \cap \bigcup \mA_{n}=\emptyset$, contradiction.

\end{proof}

Therefore $(\sigma*s)$ is a closed nowhere dense set disjoint from $X\cup \bigcup \mathcal{Y}$.

\end{proof}

\newpage

The following is immediate 

\begin{lemma}\label{2inducesws}
	Let $Y$ be a dense subspace of $X$ in which $\2$ has a winning strategy in $\bmfin(Y)$, then $\2$ has a winning strategy in $\bmfin(X)$.
\end{lemma}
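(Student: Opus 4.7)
The plan is to take a winning strategy $\sigma$ for \2 in $\bmfin(Y)$ and lift it to a winning strategy $\hat{\sigma}$ in $\bmfin(X)$ by simulating a shadow play inside $Y$. The key observation is that open subsets of $Y$ are precisely the traces $U\cap Y$ of open subsets $U$ of $X$, and, since $Y$ is dense in $X$, every non-empty open subset of $X$ meets $Y$ in a non-empty open set.

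First I would describe the lift of a single move. Suppose \1 plays a non-empty open $A\subseteq X$. Then $A\cap Y$ is a non-empty open set in $Y$, so in the shadow game $\sigma$ returns a finite family $\mathcal{B}^{Y}=\{V_{0},\dots,V_{k}\}$ of non-empty open subsets of $A\cap Y$. For each $V_{i}$, pick an open $W_{i}\subseteq X$ with $W_{i}\cap Y=V_{i}$ and replace $W_{i}$ by $W_{i}\cap A$; since $V_{i}\subseteq A$, this does not change the trace on $Y$, and we obtain $\widehat{V}_{i}\subseteq A$ open in $X$ with $\widehat{V}_{i}\cap Y=V_{i}\neq\emptyset$, in particular $\widehat{V}_{i}\neq\emptyset$. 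Declare $\hat{\sigma}(\langle A\rangle)=\{\widehat{V}_{0},\dots,\widehat{V}_{k}\}$. When \1 subsequently plays, for each $\widehat{V}_{i}$, a non-empty open $A_{i}\subseteq\widehat{V}_{i}$ in $X$, the trace $A_{i}\cap Y$ is a non-empty open subset of $V_{i}$ by density of $Y$; feed these traces to $\sigma$ as \1's shadow move and iterate.

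Carrying this out inductively yields, for any play $\langle A_{0},\mathcal{A}_{1},\mathcal{A}_{2},\dots\rangle$ of \1 in $\bmfin(X)$ against $\hat{\sigma}$, a legal play of $\bmfin(Y)$ against $\sigma$ whose moves are the $Y$-traces of the corresponding moves in $X$. Since $\sigma$ is winning, there exists
\[
y\in\bigcap_{n\in\omega}\bigcup\sigma(\langle A_{0}\cap Y,\mathcal{A}_{1}\cap Y,\dots,\mathcal{A}_{n}\cap Y\rangle).
\]
Because every shadow open set $V$ is the trace $\widehat{V}\cap Y$ of its lift, this $y$ belongs to $\bigcap_{n\in\omega}\bigcup\hat{\sigma}(\langle A_{0},\mathcal{A}_{1},\dots,\mathcal{A}_{n}\rangle)$, and \2 wins in $\bmfin(X)$.

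There is no genuine obstacle here; the only bookkeeping point is making sure that when we lift a shadow open set $V\subseteq A\cap Y$ to an open set $\widehat{V}\subseteq X$ we also arrange $\widehat{V}\subseteq A$, which is achieved by the trivial intersection with $A$ described above. This is presumably why the authors label it as \emph{immediate}.
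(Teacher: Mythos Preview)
Your argument is correct and is exactly the standard lift one expects here; the paper itself gives no proof at all, simply declaring the lemma ``immediate'' before stating it. Your final remark already anticipates this: the only point requiring care is ensuring the lifted sets sit inside the ambient open set \1 just played, and your intersection trick handles that cleanly.
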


\begin{proposition}\label{propfinal}
  (CH) There is a Baire subspace $X$ of $\RR$ such that \2 has no winning strategy for $\bmfin(X)$. 
\end{proposition}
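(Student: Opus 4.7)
The plan is to do a standard CH-style transfinite construction of length $\omega_{1}$, adding one point to $X$ at each stage in order to make $X$ Baire, while simultaneously using Lemma \ref{lematec} to defeat one potential strategy of \2 at each stage. Fix a countable base $\mathcal{B}=\{B_{n}:n\in\omega\}$ for $\RR$ and, without loss of generality, assume $\mathcal{B}$ is closed under finite intersection. By the first lemma of the section, there are at most $\mathfrak{c}=\omega_{1}$ strategies for \2 in $\bmfin(\RR)$ that use only members of $\mathcal{B}$; enumerate them as $\{\sigma_{\alpha}:\alpha<\omega_{1}\}$. By the cardinality list preceding Lemma \ref{easytec}, the family $\mathcal{H}=\{G\in\mathcal{G}_{\delta}:\exists n,\ G\subseteq B_{n}\subseteq\overline{G}\}$ also has cardinality $\mathfrak{c}$; enumerate it as $\{H_{\alpha}:\alpha<\omega_{1}\}$.

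Then I would recursively choose, for each $\alpha<\omega_{1}$, a point $x_{\alpha}\in\RR$ and a sequence $s_{\alpha}$ of \1's plays against $\sigma_{\alpha}$ such that, writing $F_{\alpha}:=\sigma_{\alpha}*s_{\alpha}$:
\begin{itemize}
\item $x_{\alpha}\in H_{\alpha}\setminus\bigcup_{\beta<\alpha}F_{\beta}$, which is possible because $H_{\alpha}$ is a dense $G_{\delta}$ in some $B_{n}\in\mathcal{B}$ (so comeager there), while $\bigcup_{\beta<\alpha}F_{\beta}$ is a countable union of closed nowhere dense sets, hence meager in $\RR$;
\item $F_{\alpha}$ is closed nowhere dense and disjoint from $\{x_{\beta}:\beta\leq\alpha\}\cup\bigcup_{\beta<\alpha}F_{\beta}$, which is exactly the conclusion of Lemma \ref{lematec} applied to $\sigma_{\alpha}$ and to the countable data assembled so far.
\end{itemize}
Set $X=\{x_{\alpha}:\alpha<\omega_{1}\}$. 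By construction $X\cap F_{\alpha}=\emptyset$ for every $\alpha$: if $\beta\leq\alpha$ then $x_{\beta}$ was explicitly excluded by the second bullet, and if $\beta>\alpha$ then $x_{\beta}$ was excluded by the first.

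Verifying that $X$ is Baire is routine: each $B\in\mathcal{B}$ itself lies in $\mathcal{H}$, so $X$ meets every basic open set of $\RR$ and is dense; and for any dense $G_{\delta}$ set $G\subseteq\RR$ and any $B\in\mathcal{B}$, Lemma \ref{easytec} puts $G\cap B$ in $\mathcal{H}$, so some $x_{\alpha}$ lies in $G\cap B$. Therefore $G\cap X$ is dense in $X$ for every dense $G_{\delta}$ $G$, and Lemma \ref{densoBairechar} gives that $X$ is Baire.

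The main thing to argue is that no $\tau$ is a winning strategy for \2 in $\bmfin(X)$. Suppose such a $\tau$ exists. First invoke the $\pi$-base reduction (as used freely after Remark \ref{jogopibase}, adapted to $\bmfin$) to assume that $\tau$ answers with members of the trace $\pi$-base $\{B\cap X:B\in\mathcal{B}\}$. The key observation is that, because $\mathcal{B}$ is closed under intersection and $X$ is dense in $\RR$, $\tau$ lifts canonically to a valid strategy $\sigma$ for \2 in $\bmfin(\RR)$ using $\mathcal{B}$: given an \1-play $\langle A_{0},\mathcal{A}_{1},\dots,\mathcal{A}_{n}\rangle$ in $\RR$, intersect every entry with $X$, run $\tau$ to obtain $\{D_{i}\cap X\}_{i}$ with $D_{i}\in\mathcal{B}$, and then set $\sigma(\langle A_{0},\dots,\mathcal{A}_{n}\rangle)=\{D_{i}\cap\bigcup\mathcal{A}_{n}\}_{i}$, which is a finite family from $\mathcal{B}$ whose union intersected with $X$ is exactly $\bigcup_{i}(D_{i}\cap X)$. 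Inductively, every \1-play in $\RR$ using $\mathcal{B}$ then produces, upon intersecting with $X$, a legitimate play in $\bmfin(X)$ against $\tau$, and the outcome in $X$ is precisely $(\sigma*s)\cap X$. Now $\sigma=\sigma_{\alpha}$ for some $\alpha$, so Alice's choices $s_{\alpha}$ realize a play against $\tau$ whose outcome is $F_{\alpha}\cap X=\emptyset$, contradicting that $\tau$ wins. The main obstacle is precisely this lifting step: one has to confirm that $\tau$'s responses can be rewritten as basic sets of $\RR$ that are valid moves of $\bmfin(\RR)$ (not merely as sets meeting $X$ inside $\bigcup\mathcal{A}_{n}$), which is where closure of $\mathcal{B}$ under intersection is needed.
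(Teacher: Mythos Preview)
Your proof is correct and follows essentially the same approach as the paper's: enumerate \2's strategies and the locally-dense $G_\delta$ sets in type $\omega_1$, use Lemma~\ref{lematec} at each stage to defeat one strategy while choosing $x_\alpha$ to meet one $G_\delta$, verify Baire via Lemma~\ref{densoBairechar}, and lift a hypothetical winning strategy on $X$ to one on $\RR$ (the paper packages this last step as Lemma~\ref{2inducesws}). One small slip in your explicit lift: $D_i\cap\bigcup\mathcal{A}_n$ is in general only a finite \emph{union} of members of $\mathcal{B}$, not itself a member, so to keep $\sigma$ among the enumerated basic-set strategies you should output the refined family $\{D_i\cap A: A\in\mathcal{A}_n,\ D_i\cap A\neq\emptyset\}$ instead --- this changes nothing in the rest of the argument.
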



\begin{proof}
Fix $\mB=\{B_{n}:n\in\omega\}$ an infinite basis for the topology of $\RR$, 
let $\{G\in \mathcal{G}_{\delta} \mid \exists n\in\omega : G\subseteq B_{n} \subseteq \overline{G}  \} = \{G_{\xi}:\xi\in\omega_{1}\}$, and fix an enumeration $\{\sigma_{\xi} : \xi<\omega_{1} \}$ of $\2$'s strategies in the game $\bmfin(\RR)$.

By transfinite induction on $\xi < \omega_{1}$ we will construct a sequence $\{x_{\xi} : \xi < \omega_{1}  \}$ of points of $\RR$ such that for every $\xi < \omega_{1}$

\begin{itemize}
\item [(a)] $x_{\xi}\in G_{\xi}$
\item [(b)] there exists a sequence of finite collections of open sets $s_{\xi}$ such that
$$(\sigma_{\xi}*s_{\xi})\cap \left(\{x_{\beta}:\beta\leq\xi   \} \cup \bigcup_{\beta<\xi}(\sigma_{\beta}*s_{\beta})    \right) =\emptyset$$

\end{itemize}

Consider $x_{0}\in G_{0}$ and $\sigma_{0}$. By Lemma \ref{lematec}, there exists $s_{0}$ such that $\sigma_{0}*s_{0}$ is a closed nowhere dense set and $(\sigma_{0}*s_{0})\cap \{x_{0}\}=\emptyset$. Set $x_{1}\in G_{1} \setminus (\{x_{0} \} \cup  (\sigma_{0}*s_{0})   )  $, by Lemma \ref{lematec}, there exists $s_{1}$ such that $\sigma_{1}*s_{1}$ is a closed nowhere dense set and $(\sigma_{1}*s_{1})\cap (\{x_{0}, x_{1}\} \cup (\sigma_{0}*s_{0})  )  =\emptyset$. 

Now suppose that we have built $\{x_{\beta} : \beta<\alpha\}$ with $\alpha<\omega_{1}$, note that $$  \bigcap_{\beta<\alpha}\RR\setminus (\sigma_{\beta}*s_{\beta})$$
is a dense $G_{\delta}$-set. 
We claim that $G_{\alpha} \setminus \left( \{x_{\beta} : \beta<\alpha     \} \cup \bigcup_{\beta<\alpha} (\sigma_{\beta}*s_{\beta})       \right) \not=\emptyset$. In fact, suppose otherwise, $G_{\alpha}\subseteq  
\{x_{\beta} : \beta<\alpha     \} \cup \bigcup_{\beta<\alpha} (\sigma_{\beta}*s_{\beta})$, so $G_{\alpha}$ is a meager $G_{\delta}$-set in $\RR$, then, by Proposition \ref{hrusak}, $G_{\alpha}$ is nowhere dense, contradiction, because $G_{\alpha}\in \{G\in \mathcal{G}_{\delta} \mid \exists n\in\omega : G\subseteq B_{n} \subseteq \overline{G}  \}$. Therefore, choose $x_{\alpha} \in 
G_{\alpha} \setminus \left( \{x_{\beta} : \beta<\alpha     \} \cup \bigcup_{\beta<\alpha} (\sigma_{\beta}*s_{\beta})       \right)$.
By Lemma \ref{lematec}, there exists $s_{\alpha}$ such that $\sigma_{\alpha}*s_{\alpha}$ is a closed nowhere dense set and $$(\sigma_{\alpha}*s_{\alpha})\cap \left(\{x_{\beta}:\beta\leq\alpha   \} \cup \bigcup_{\beta<\alpha}(\sigma_{\beta}*s_{\beta})    \right) =\emptyset$$ 

Finally denote $X=\{x_{\xi} : \xi\in\omega_{1}\}$, note that $X$ is a dense set. Indeed, let $m\in\omega$ and consider $b\in B_{m}$, so $(B_{m}\setminus \{ b \}) \in \{G\in \mathcal{G}_{\delta} \mid \exists n\in\omega : G\subseteq B_{n} \subseteq \overline{G}  \} = \{G_{\xi}:\xi\in\omega_{1}\}$. Then there is a $\xi_{m}\in\omega_{1}$ such that $B_{m}\setminus\{b\} = G_{\xi_{m}}$, by construction $x_{\xi_{m}} \in G_{\xi_{m}}\cap X \subseteq B_{m}\cap X$.

\begin{claim}
		$X$ is a Baire space. In particular $\1$ has no winnnig strategy in $\bmfin(X)$. 
\end{claim}	

\begin{proof}
Suppose otherwise, then by Lemma \ref{densoBairechar}, there exists $G\in\mathcal{D}\mathcal{G}_{\delta}$ such that $G\cap X$ is not dense in $X$. So there is an $m\in\omega$ such that $(B_{m}\cap X) \cap (G\cap X) = B_{m} \cap (G\cap X) =\emptyset$. Then, by Lemma \ref{easytec}, there is a $\xi<\omega_{1}$ such that $G_{\xi}= B_{m}\cap G$, but by construction $x_{\xi}\in G_{\xi}\cap X = B_{m}\cap G \cap X =\emptyset$, contradiction.

\end{proof}

\begin{claim}
		$\2$ has no winning strategy in $\bmfin(X)$.
\end{claim}

\begin{proof}
Suppose otherwise, that is, $\2$ has a winning strategy $\tilde{\sigma}_{_{X}}$ in $\bmfin(X)$. As $X$ is dense, by Lemma \ref{2inducesws}, $\tilde{\sigma}_{_{X}}$ induces a winning strategy $\tilde{\sigma}$ for $\2$ in $\bmfin(\RR)$. Also, $\tilde{\sigma}$ satisfies that for each sequence  $s=\langle\mA_{n} : n\in\omega\rangle$ of $\1$'s moves in $\bmfin(\RR)$,
$$\tilde{\sigma}_{_{X}}*s^{\prime}  =  \bigcap_{n \in \omega}\bigcup \tilde{\sigma}_{_{X}}(\langle \mA^{X}_{0}, \cdots, \mA^{X}_{n} \rangle) \subseteq  \bigcap_{n \in \omega} \bigcup \tilde{\sigma}(\langle \mA_{0}, \cdots, \mA_{n} \rangle) \cap X  = (\tilde{\sigma}*s )\cap X   $$
where $s^{\prime}=\langle  \mA_{n}\cap X  : n\in\omega  \rangle $ with $\mA_{n}\cap X = \{A\cap X : A\in \mA_{n} \}$. In particular, there is a $\xi<\omega_{1}$ such that $\tilde{\sigma}=\sigma_{\xi}$. 

Let $s_{\xi}=\langle \mA^{\xi}_{n} : n\in \omega \rangle$ of our construction. 

\begin{claim}
	$(\sigma_{\xi} * s_{\xi})\cap X=\emptyset$.
\end{claim}

\begin{proof}
In fact, suppose otherwise, there is an $\alpha<\omega_{1}$ such that $x_{\alpha} \in (\sigma_{\xi}*s_{\xi})$. We have the following cases:
\begin{itemize}
	\item $\alpha\leq \xi$: by part (b),
	$(\sigma_{\xi}*s_{\xi})\cap \{x_{\beta}:\beta\leq\xi \}=\emptyset$, contradiction. 
	\item $\alpha>\xi$: by construction, $x_{\alpha} \in 
	G_{\alpha} \setminus \left( \{x_{\beta} : \beta<\alpha     \} \cup \bigcup_{\beta<\alpha} (\sigma_{\beta}*s_{\beta})       \right)$, contradiction.
\end{itemize}

\end{proof}

So $\emptyset\not= \tilde{\sigma}_{_{X}}*s^{\prime}_{\xi}  \subseteq    (\sigma_{\xi} * s_{\xi})\cap X =\emptyset$, contradiction. Then, $\2$ has no winning strategy in $\bmfin(X)$.

\end{proof}

Therefore $X$ is the desired space.		
\end{proof}

\begin{corollary}
		(CH) There exists a subspace $X$ of the real line in which the game $\bmfin(X)$ is not determined.
\end{corollary}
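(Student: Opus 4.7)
The plan is to derive this immediately from Proposition \ref{propfinal} together with the corollary that characterizes the Baire property through the absence of a winning strategy for \1 in $\bmfin$. Concretely, I would let $X$ be the subspace of $\RR$ produced under CH by Proposition \ref{propfinal}: a Baire subspace on which \2 has no winning strategy in $\bmfin(X)$.

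Next, I would invoke the equivalence $(1) \Leftrightarrow (3)$ from the characterization corollary (Baire $\Leftrightarrow$ \1 has no winning strategy in $\bmfin$). Since $X$ is Baire, this equivalence directly yields that \1 has no winning strategy in $\bmfin(X)$. Combined with the conclusion of Proposition \ref{propfinal} that \2 also has no winning strategy in $\bmfin(X)$, neither player has a winning strategy, which is exactly the statement that $\bmfin(X)$ is not determined.

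There is essentially no obstacle here: all the work has already been absorbed into Proposition \ref{propfinal}, whose delicate transfinite construction ensures both that $X$ hits every relevant dense $G_\delta$ (so $X$ is Baire) and that every potential strategy $\sigma_\xi$ of \2 in $\bmfin(\RR)$ is defeated by a sequence $s_\xi$ whose outcome $\sigma_\xi * s_\xi$ avoids $X$. The corollary is thus a one-line consequence, and I would simply present it as: \emph{Let $X$ be the subspace of $\RR$ given by Proposition \ref{propfinal}. Since $X$ is Baire, \1 has no winning strategy in $\bmfin(X)$ by the preceding characterization corollary; since \2 has no winning strategy in $\bmfin(X)$ by Proposition \ref{propfinal}, the game $\bmfin(X)$ is undetermined.}
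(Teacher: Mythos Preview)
Your proposal is correct and matches the paper's (implicit) reasoning: the corollary is stated without proof because Proposition~\ref{propfinal} already establishes both that $X$ is Baire (hence, via the characterization corollary, \1 has no winning strategy in $\bmfin(X)$) and that \2 has no winning strategy in $\bmfin(X)$. Your one-line derivation is exactly what is intended.
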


\begin{corollary}\label{chcor}
		(CH) There is a productively Baire space such that \2 has no winning strategy for the $\bmfin$ game.
\end{corollary}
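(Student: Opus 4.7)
The plan is to observe that the space $X$ constructed in Proposition \ref{propfinal} already does the job, essentially for free. First I would invoke Proposition \ref{propfinal} to obtain, under CH, a Baire subspace $X$ of $\RR$ in which \2 has no winning strategy for $\bmfin(X)$.

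Next I would note that $X$, being a subspace of the real line, is second countable, so it has a countable base, in particular a countable $\pi$-base. Combined with the fact that $X$ is Baire, Corollary \ref{cpbpb} immediately yields that $X$ is productively Baire. Thus the same space witnesses both properties required by the statement: it is productively Baire, yet \2 lacks a winning strategy in $\bmfin(X)$.

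There is no real obstacle here, since all the work has already been done in Proposition \ref{propfinal} and in the earlier productively-Baire corollary for second countable spaces. The only thing to check is that the two ingredients are compatible, which they obviously are, as $X \subseteq \RR$ is automatically second countable. The corollary is therefore essentially a repackaging of Proposition \ref{propfinal} via Corollary \ref{cpbpb}, and in particular it settles negatively the natural question of whether ``productively Baire'' is equivalent to ``\2 has a winning strategy in $\bmfin$''.
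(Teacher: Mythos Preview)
Your proposal is correct and follows essentially the same approach as the paper: take the space $X$ from Proposition \ref{propfinal} and apply Corollary \ref{cpbpb}, using that $X \subseteq \RR$ is second countable, to conclude that $X$ is productively Baire. The paper's proof is the same one-line argument, just stated more tersely.
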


\begin{proof}
	Let $X$ as in Proposition \ref{propfinal}, then by Corollary \ref{cpbpb}, $X$ is productively Baire.
\end{proof}

\begin{question}
	Corollary \ref{chcor} holds if we change $\bmfin$ to $\bmc$?
\end{question}

\begin{question}
Is there any relationship between the game $\bmc$ and the productively Baire spaces?	
\end{question}

\begin{corollary}
(CH) There exists a space $X$ such that Bob has a winning strategy in $\bm_\omega$ but $\2$ has no winning strategy in $\bmfin$.
\end{corollary}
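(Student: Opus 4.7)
The plan is to show that the very space $X$ produced in Proposition \ref{propfinal} already witnesses the corollary, so no new construction is required. By that proposition (under CH), $X$ is a Baire subspace of $\RR$ on which \2 has no winning strategy in $\bmfin(X)$, which is exactly the second half of the statement.

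For the first half I would argue as follows. Since $X \subseteq \RR$, it is second countable, so the traces on $X$ of a countable base for $\RR$ form a countable $\pi$-base $\mB$ for $X$. Any countable $\pi$-base is, trivially, locally countable in the sense of the definition above (each member contains only countably many members of $\mB$ simply because $\mB$ itself is countable). Combined with the fact that $X$ is Baire, the hypotheses of the theorem stating that ``if $X$ is a Baire space with a locally countable $\pi$-base, then \2 has a winning strategy on $\bmc(X)$'' are satisfied. Applying that theorem yields a winning strategy for \2 in $\bmc(X) = \bm_\omega(X)$.

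There is essentially no obstacle here; the corollary is a direct combination of Proposition \ref{propfinal} with the earlier theorem on locally countable $\pi$-bases, the only thing to check being that second countability of $X$ supplies a (locally) countable $\pi$-base, which is immediate.
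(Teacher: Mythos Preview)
Your proposal is correct and matches the paper's intended (implicit) argument: the corollary is stated without proof precisely because it follows immediately by combining Proposition~\ref{propfinal} with the theorem that a Baire space with a locally countable $\pi$-base admits a winning strategy for \2 in $\bmc$, and your observation that a second countable subspace of $\RR$ has a (trivially locally) countable $\pi$-base is exactly the missing link.
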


\end{document}